\newcommand{\la}{\langle}
\newcommand{\ra}{\rangle}
\newcommand{\leqs}{\leqslant}
\newcommand{\geqs}{\geqslant}
\newcommand{\Aut}{\operatorname{Aut}}
\newcommand{\CC}{\mathbb{C}}
\newcommand{\Irr}{\mathrm {Irr}}
\newcommand{\OO}{\mathrm {(O)}}
\newcommand{\z}{\mathbf{Z}}
\newcommand{\F}{\mathrm{(F)}} 
\newcommand{\CI}{\mathrm{(CI)}} 
\newcommand{\Fp}{\mathrm{(F')}}
\newcommand{\imod}[1]{\allowbreak\mkern4mu({\operator@font mod}\,\,#1)}
\renewcommand{\leq}{\leqs}
\renewcommand{\geq}{\geqs}
\newtheorem{theorem}{Theorem}
\newtheorem{corol}[theorem]{Corollary}
\newtheorem{thm}{Theorem}[section] 
\newtheorem{prop}[thm]{Proposition} 
\newtheorem{lem}[thm]{Lemma}
\newtheorem{cor}[thm]{Corollary}
\theoremstyle{definition}
\newtheorem{rem}[thm]{Remark}
\newtheorem{problem}{Problem}
\begin{document}
\title[Camina pairs and cosets]{Some generalizations of Camina Pairs and orders of elements in cosets}

\author{Thu T.H. Quan}
\address{Thu T.H. Quan, Department of Mathematics and Statistics, Binghamton University, Binghamton, NY 13902-6000, USA}
\email{tquan1@binghamton.edu}

\author{Hung P. Tong-Viet}
\address{H.P. Tong-Viet, Department of Mathematics and Statistics, Binghamton University, Binghamton, NY 13902-6000, USA}
\email{htongvie@binghamton.edu}

\renewcommand{\shortauthors}{Quan, Tong-Viet}

\begin{abstract}
In this paper, we investigate certain generalizations of Camina pairs. Let $H$ be a nontrivial proper subgroup of a finite group $G$. We first show that every nontrivial irreducible complex character of $H$ induces homogeneously to $G$ if and only if for every $x\in G\setminus H$, the element $x$ is conjugate to $xh$ for all $h\in H$. Furthermore we prove that if $xh$ is conjugate to either $x$ or $x^{-1}$ for all $h\in H$ and all $x\in G\setminus H$, then the normal closure $N$ of $H$ in $G$ also satisfies the same condition, and $N$ is nilpotent. Finally, we determine the structure of $H$ under the assumption that for every element $x\in G\setminus H$ of odd order, the coset $xH$ consists entirely of elements of odd order.
\end{abstract}

\date{\today}

\maketitle

\section{Introduction}\label{s:intro}
Let $G$ be a finite group and let $H$  be a nontrivial proper normal subgroup of $G$. The pair $(G,H)$ is called a \emph{Camina pair} if, for every element $g\in G\setminus H$, the element $g$ is conjugate to every element of the coset $gH$. The study of Camina pairs has a long history, beginning with Camina's work in \cite{Camina}, where he introduced certain conditions that nearly characterize Frobenius groups. In particular, Camina showed in  \cite[Theorem 2]{Camina} that if $(G,H)$ is a Camina pair, then one of the following holds:  $G$ is a Frobenius group with Frobenius kernel $H$; or $H$ is a $p$-group; or $G/H$ is a $p$-group for some prime $p$. We refer the reader to the recent survey  by Lewis \cite{Lewis} for a comprehensive discussion of  various equivalent formulations of Camina pairs and their generalizations.

For a finite group $G$, we denote by $\Irr(G)$ the set of complex irreducible characters of $G$. Using character theory of finite groups, an equivalent formulation of the Camina pair was established by Chillag and Macdonald in \cite[Propsition 3.1]{Chillag-Macdonald}. They show that if $H$ is not contained in the kernel of an irreducible character $\chi\in\Irr(G)$, then $\chi$ vanishes on $G\setminus H$. Another equivalent condition was also provided by Kuisch and van der Waall in \cite{Kuisch-vanderWaall}. Recall that a character $\theta\in \Irr(H)$ is said to induce \emph{homogeneously} to $G$ if $\theta^G = a\xi$ for some $\xi\in \Irr(G)$ and a positive integer $a$, where $\theta^G$ denotes the character of $G$ induced from $\theta$. Kuisch and van der Waall proved in \cite[Lemma 2.1]{Kuisch-vanderWaall} that $(G,H)$ is a Camina pair if and only if every nontrivial irreducible character of $H$ induces homogeneously to $G$. 
In fact, they considered a more general assumption where $H$ is not required to be a normal subgroup of $G$. They referred to this broader condition as  $\CI$, defined as follows:

\smallskip
\noindent
{{$\CI$:} $1<H<G$, and every nontrivial irreducible character of $H$ induces homogeneously to $G$.}

\smallskip
In the case when $H$ is not normal in $G$ and the pair $(G,H)$ satisfies condition $\CI$, among other results, it is proved in \cite{Kuisch-vanderWaall} that $H$ is nilpotent and if $N$ is the normal closure of $H$ in $G$, that is, $N$ is the smallest normal subgroup of $G$ containing $H$, then $N$ is a $p$-group for some prime $p$ or $G$ is a Frobenius group with Frobenius kernel $N.$

 \smallskip
In this paper, we also consider the following condition, which we refer to as the $\F$ condition. This condition is precisely the Camina pair condition without the assumption that $H$ is normal in $G$, and is defined as follows:

\smallskip
\noindent
$\F:$ $1<H<G$ and for all $x\in G\setminus H$, $x$ is conjugate to $xh$ for all $h\in H$. 

For $g\in G$, let $g^G$ denote the conjugacy class of $G$ containing $g$. Then condition $\F$ can be stated as follows: 
\begin{center} $1<H<G$ and for all $x\in G\setminus H,$ $xH\subseteq x^G$. \end{center}
  
\smallskip
Our first theorem establishes the equivalence of the $\CI$ and $\F$ conditions for any finite group $G$ and nontrivial proper subgroup $H$.

\begin{theorem}\label{thm1}
	Let $G$ be a finite group and let $H$ be a nontrivial proper subgroup of $G$. Then the pair $(G,H)$ satisfies condition $\CI$ if and only if it satisfies condition $\F$. 
\end{theorem}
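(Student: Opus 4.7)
For each $\chi \in \Irr(G)$ I introduce the auxiliary function $\alpha_\chi \colon G \to \CC$ defined by
\[
\alpha_\chi(g) = \frac{1}{|H|}\sum_{h \in H}\chi(gh).
\]
On $H$ this is the constant $n_1(\chi) := [\chi|_H, 1_H]_H$. A short column-orthogonality argument shows that condition $\F$ is equivalent to the assertion that $\chi(g) = \alpha_\chi(g)$ for every $\chi \in \Irr(G)$ and every $g \in G \setminus H$: one direction is immediate, and for the other, multiplying $\sum_h \chi(gh) = |H|\chi(g)$ by $\overline{\chi(y)}$ and summing over $\chi \in \Irr(G)$ forces $|\{h \in H : gh \sim y\}| = |H|\cdot \mathbf{1}[y \sim g]$ and hence $gH \subseteq g^G$.

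The heart of the argument is an \emph{unconditional} Schur-orthogonality computation. Starting from the matrix-coefficient identity $\sum_{g \in G}\chi(g)\overline{\chi'(gh)} = \frac{|G|}{\chi(1)}\delta_{\chi,\chi'}\overline{\chi(h)}$, one obtains
\[
[\alpha_\chi, \alpha_{\chi'}]_G \;=\; [\alpha_\chi, \chi']_G \;=\; [\chi, \alpha_{\chi'}]_G \;=\; \frac{n_1(\chi)}{\chi(1)}\,\delta_{\chi, \chi'},
\]
and combined with $[\chi, \chi']_G = \delta_{\chi, \chi'}$ this yields
\[
[\alpha_\chi - \chi, \alpha_{\chi'} - \chi']_G \;=\; \frac{\chi(1) - n_1(\chi)}{\chi(1)}\,\delta_{\chi, \chi'}.
\]
Separately, splitting this inner product into its contributions from $g \in H$ and $g \in G \setminus H$, and writing $n_\theta(\chi) := [\chi|_H, \theta]_H$, one rewrites the left side as
\[
\frac{1}{[G:H]}\sum_{\theta \in \Irr(H) \setminus \{1_H\}}n_\theta(\chi)\,n_\theta(\chi')\; + \; \frac{1}{|G|}\sum_{g \in G \setminus H}\bigl(\chi(g) - \alpha_\chi(g)\bigr)\overline{\bigl(\chi'(g) - \alpha_{\chi'}(g)\bigr)}.
\]

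Both implications then follow by comparing these two expressions. For $\F \Rightarrow \CI$, the hypothesis makes the second summand vanish, so $\sum_{\theta \neq 1_H}n_\theta(\chi)n_\theta(\chi') = 0$ for $\chi \neq \chi'$; non-negativity of each term forces $n_\theta(\chi)n_\theta(\chi') = 0$ for every nontrivial $\theta$, so each such $\theta$ is a constituent of at most one restriction $\chi|_H$, and by Frobenius reciprocity $\theta^G$ is homogeneous. For $\CI \Rightarrow \F$, the relations $\theta^G = a_\theta\chi$ with $a_\theta = [G:H]\theta(1)/\chi(1)$, together with the degree identity $\chi(1)(\chi(1) - n_1(\chi)) = [G:H]\sum_\theta \theta(1)^2$ (summed over the nontrivial constituents $\theta$ of $\chi|_H$), give $\frac{1}{[G:H]}\sum_{\theta \neq 1_H}n_\theta(\chi)^2 = \frac{\chi(1) - n_1(\chi)}{\chi(1)}$, which already exhausts the full value of $[\alpha_\chi - \chi, \alpha_\chi - \chi]_G$; the off-$H$ summand (at $\chi = \chi'$) must therefore be zero, forcing $\chi = \alpha_\chi$ on $G \setminus H$, which is $\F$. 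The main step I expect to require care is the Schur orthogonality computation producing the unconditional formulas for $[\alpha_\chi, \alpha_{\chi'}]_G$ and its cross-terms; once these are in hand the remainder of the proof is a short, symmetric bookkeeping exercise.
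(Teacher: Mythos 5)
Your argument is correct, and it takes a genuinely different route from the paper. The paper proves the two implications separately via structure theory: it passes to the normal closure $N$ of $H$, shows $(G,N)$ is a Camina pair, invokes Camina's classification and Thompson's theorem to get $N$ nilpotent with $H\unlhd N$ and $N/H$ abelian, and then runs a Clifford-theoretic analysis of $\chi_N$ (splitting into the cases $[\chi_H,1_H]\neq 0$ and $[\chi_H,1_H]=0$) for one direction, and a computation with the class sum $\widehat{H}$ in $\mathbb{C}[N]$ for the other. Your proof bypasses all of this: the identity $[\alpha_\chi,\alpha_{\chi'}]_G=[\alpha_\chi,\chi']_G=[\chi,\alpha_{\chi'}]_G=\frac{n_1(\chi)}{\chi(1)}\delta_{\chi,\chi'}$ checks out (substituting $u=gh'$ and using the matrix-coefficient relation), the split of $[\alpha_\chi-\chi,\alpha_{\chi'}-\chi']_G$ into the $H$-part $\frac{1}{[G:H]}\sum_{\theta\neq 1_H}n_\theta(\chi)n_\theta(\chi')$ plus the off-$H$ part is right (on $H$ one has $\alpha_\chi\equiv n_1(\chi)$, so $\alpha_\chi-\chi=-\sum_{\theta\neq 1_H}n_\theta(\chi)\theta$ there), and both implications then follow exactly as you say: for $\F\Rightarrow\CI$ the off-$H$ term dies and nonnegativity forces each nontrivial $\theta$ to lie under a unique $\chi$; for $\CI\Rightarrow\F$ the relation $n_\theta(\chi)=[G:H]\theta(1)/\chi(1)$ plus the degree count makes the $H$-part saturate $\frac{\chi(1)-n_1(\chi)}{\chi(1)}$, killing the nonnegative off-$H$ sum, and the column-orthogonality equivalence converts $\chi=\alpha_\chi$ on $G\setminus H$ back into $gH\subseteq g^G$. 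What your approach buys is a short, self-contained, classification-free and Thompson-free proof that makes no normality assumptions anywhere and yields the multiplicities explicitly; what the paper's approach buys is the structural byproducts (nilpotency of $N$, the Camina-pair property of $(G,N)$, $H\unlhd N$ with abelian quotient) that it reuses in later sections and in Theorem \ref{thm2}, and which your computation does not recover.
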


In a recent paper, Akhlaghi and Beltr\'{a}n \cite{Akhlaghi-Beltran} studied the following generalization of Camina pairs: let  $H$ be a nontrivial proper normal subgroup  of a finite group $G$, and suppose that for all $g\in G\setminus H$, the element $gn$ is conjugate to $g$ or $g^{-1}$ for every $n\in  H.$ Under this assumption, they proved that if $H$ is not nilpotent,  then $G/H$ is a $p$-group and $G$ is $p$-nilpotent and solvable for some prime $p.$ On the other hand, if $H$ is nilpotent, then $G$ is a Frobenius group with kernel $H$; or $H$ is of prime power order; or $|G/H|=2$, the center of $G$ is of order $2$, is contained in $H$ and $G/Z(G)$ is a generalized dihedral group.

We extend the condition studied by Akhlaghi and Beltr\'{a}n by removing the normality requirement on $H$. Specifically, we consider the following condition, which we denote by $\Fp$. 

\smallskip
{$\Fp$}: $1<H<G$, and for all $x\in G\setminus H$,  $xh$ is conjugate to $x$ or $x^{-1}$ for every $h\in H$. 
\smallskip

Note that condition $\F$ implies condition $\Fp$. As it turns out, if the pair $(G,H)$ satisfies condition $\Fp$, then the pair $(G,N)$ also satisfies $\Fp$, where $N$ is the normal closure of $H$ in $G$. Moreover, under this assumption, $N$ is nilpotent. 

\begin{theorem}\label{thm2}
	Let $G$ be a finite group and let $H$ be a nontrivial proper subgroup of $G$. Suppose that $H$ is not normal in $G$.  Let $N$ be the normal closure of $H$ in $G$. If the pair $(G,H)$ satisfies condition $\Fp$, then $(G,N)$ satisfies $\Fp$ and $N$ is nilpotent. 
\end{theorem}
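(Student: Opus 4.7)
The plan is to proceed in three stages: first transfer $\Fp$ to every conjugate $H^{g}$ of $H$, then bootstrap to the normal closure $N=\langle H^{g}\mid g\in G\rangle$ by a word-length induction, and finally establish nilpotency of $N$ via the Akhlaghi-Beltr\'an theorem combined with the normal-closure hypothesis.

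For the first stage, fix $g\in G$. Given $x\in G\setminus H^{g}$ and $k=g^{-1}hg\in H^{g}$ with $h\in H$, the element $gxg^{-1}$ lies outside $H$, so $\Fp$ for $(G,H)$ gives $(gxg^{-1})h$ conjugate to $gxg^{-1}$ or to its inverse; conjugating by $g^{-1}$ preserves $G$-conjugacy and yields $xk$ conjugate to $x$ or $x^{-1}$, so $(G,H^{g})$ also satisfies $\Fp$. For the second stage, write any $n\in N$ as $n=h_{1}h_{2}\cdots h_{r}$ with $h_{i}\in H^{g_{i}}$, and induct on $r$. Given $x\in G\setminus N$, every partial product $xh_{1}\cdots h_{i}$ stays in $G\setminus N$, for otherwise $x\in N$. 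Writing $y=xh_{1}\cdots h_{r-1}$, the first stage applied to $H^{g_{r}}$ gives $xn=yh_{r}$ conjugate to $y$ or $y^{-1}$, while the inductive hypothesis gives $y$ conjugate to $x$ or $x^{-1}$; chaining these yields $xn$ conjugate to $x$ or $x^{-1}$, so $(G,N)$ satisfies $\Fp$.

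For the third and deepest stage, I would apply the Akhlaghi-Beltr\'an theorem to the normal pair $(G,N)$. If $N$ is nilpotent we are done; otherwise their result forces $G/N$ to be a $p$-group and $G$ to be $p$-nilpotent and solvable, so writing $G=K\rtimes Q$ with $K$ the normal $p$-complement gives $K\leq N$. A direct consequence of $\Fp$ for $(G,N)$ is the coset containment $xN\subseteq x^{G}\cup(x^{-1})^{G}$ for every $x\in G\setminus N$, yielding the centralizer bound $|C_{G}(x)|\leq 2|G/N|$. I would then combine this with $N=H^{G}$ to contradict non-nilpotency via two cases: if $H\leq K$, then $N=H^{G}\leq K$, so $N=K$ is a $p'$-group whose non-nilpotency is ruled out by applying the centralizer bound to suitable elements of $Q$ together with Sylow analysis in $N$; if $H$ is not contained in $K$, then $H$ projects nontrivially to the $p$-group $G/K$, and a parallel coset analysis provides elements of $G\setminus N$ incompatible with $\Fp$.

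I expect the principal obstacle to be precisely this last stage: eliminating the non-nilpotent branch of Akhlaghi-Beltr\'an using only the normal-closure property together with the coset and centralizer data coming from $\Fp$. A likely cleaner route will be to combine the centralizer inequality with Chillag-Macdonald-style character vanishing on $G\setminus N$ (which $\Fp$ yields up to complex conjugation of character values) to force each Sylow subgroup of $N$ to be normal in $N$, thereby completing the proof.
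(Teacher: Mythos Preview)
Your first two stages are essentially the paper's own argument (its Lemma~4.3), repackaged as an element-wise induction rather than a conjugacy-class containment $KN\subseteq K\cup K^{-1}$; both are correct. One small omission: your induction only applies to $x\in G\setminus N$ and never establishes that $N<G$ in the non-normal case. The paper gets this for free from the class formulation, since $1\in KN\subseteq K\cup K^{-1}$ would force $x=1$ for any $x\in\Delta_H(G)$.

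The real gap is exactly where you flag it: stage three. Your two cases are the right dichotomy, but the mechanisms you propose (the centralizer bound $|C_G(x)|\leq 2|G:N|$, ``Sylow analysis'', character vanishing up to conjugation) are not what makes the argument close, and it is not clear they would. The paper's proof rests on three concrete ingredients you are missing. First, a containment lemma: if $M\unlhd G$ and $H\not\subseteq M$, then in fact $M<H$ (proved exactly as for condition~$\F$). Applied to the normal $p$-complement $K$, this upgrades your ``$H\not\leq K$'' to $K<H$, so one can pass to the $p$-group $G/K$ with the pair $(G/K,H/K)$ still satisfying~$\Fp$. Second, a $p$-group lemma: if $G$ is a $p$-group and $(G,H)$ satisfies~$\Fp$, then $H\unlhd G$ (climb the upper central series using $\z(G)\leq H$). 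This disposes of your Case~2 by forcing $H\unlhd G$, a contradiction. Third, in Case~1 one has $N=K$, and the paper does not use your centralizer bound at all; instead it shows every element of $G\setminus N$ is a $p$-element, and then for $1\neq n\in N$ any $x\in C_G(n)\setminus N$ would make $xn$ simultaneously a $p$-element (since $xn\in xN\subseteq x^G\cup(x^{-1})^G$) and of order $o(x)o(n)$ with $o(n)$ coprime to $p$. Hence $C_G(n)\subseteq N$, so $G$ is Frobenius with kernel $N$, and Thompson's theorem gives the contradiction. Your centralizer inequality is too coarse to extract this; the decisive input is the element-order incompatibility, not a cardinality bound.
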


We should point out the connection between conditions $\F$ and $\Fp$ and recent work by Camina et al. in \cite{Camina2}. Let $G$ be a finite group and let $N$ be a nontrivial proper normal subgroup of $G$. The pair $(G,N) $ is called an \emph{equal order pair} if, for every $x\in G\setminus N$, the elements in the coset $xN$ all have the same order as $x$. Clearly, if the pair $(G,H)$ satisfies condition $\F$ or $\Fp$, then the pair $(G,N)$ is an equal order pair, where $N$ is the normal closure of $H$ in $G$.  In particular, since $N$ is nilpotent by Theorem \ref{thm2}, the structure of $G$ is constrained by \cite[Theorem B]{Camina2}. For instance, if $N$ is a $p$-group for some prime $p$, then for every prime $r\neq p$, the Sylow $r$-subgroups of $G$ are either cyclic or generalized quaternion; and if $N$ is not of prime power order, then $N$ coincides with the Fitting subgroup of $G$, that is, the largest nilpotent normal subgroup of $G$, and the Sylow subgroups of $G/N$ are either cyclic or generalized quaternion. This naturally lead to the following problem.

\begin{problem}\label{prob1}
Let $G$ be a finite group and let $H$ be a nontrivial proper subgroup of $G$. Study the structure of $G$ if the pair $(G,H)$ satisfies the condition: for every $x\in G\setminus H$, all elements in the coset $xH$ have the same order as $x$. 
\end{problem}

\medskip

The  main inspiration  for our definition  of condition $\F$ follows from the work of Navarro and Guralnick in \cite{GN}.  In that paper, the authors investigate the structure of finite group $G$ with a conjugacy class $K$ such that $K^2$ is a conjugacy class of $G$. This is a special case of a conjecture proposed by Arad and Herzog in  \cite{Arad-Herzog} stating that in a finite nonabelian simple group, the product of two nontrivial conjugacy classes cannot be a single class. In \cite{GN}, they proved that if $K^2$ is a conjugacy class of $G$, then $N:=[x,G]$ is solvable, where $x\in K$. Moreover, all elements of the coset $xN$ are conjugate in $G$. Inspired by this, they also show that  if $N$ is a normal subgroup of $G$ and $x\in G$ and that either all elements of $xN$ are $G$-conjugate or all elements of $xN$ have odd order, then $N$ is solvable.  As before, dropping the normality assumption, we can prove the following result.

\begin{theorem}\label{th:odd-order-coset}
Let $G$ be a finite group and let $H$ be a proper subgroup of $G$. Suppose that for every element $x\in G\setminus H$ of odd order, all elements in the coset $xH$ have odd order. Then either $O^2(H)$ is normal in $G$ and $G/O^2(H)$ is a $2$-group, or $H$ is solvable.
\end{theorem}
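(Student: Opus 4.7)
The plan is to split into two cases depending on whether every odd-order element of $G$ lies in $H$. Write $T\subseteq G$ for the set of elements of odd order. If $T\subseteq H$, then the odd-order elements that generate $O^2(H)$ are exactly those that generate $O^2(G)$, so $O^2(H)=O^2(G)$; this subgroup is characteristic, hence normal in $G$, and $G/O^2(H)=G/O^2(G)$ is a $2$-group, giving the first conclusion of the theorem.

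Henceforth assume there exists $x\in T\setminus H$, and the goal is to show that $H$ is solvable. The hypothesis is clearly invariant under inner automorphisms of $G$: for every $g\in G$ the pair $(G,H^g)$ satisfies the same assumption. Applied to $(G,H^g)$ with the same element $x$, this yields $xH^g\subseteq T$ whenever $x\notin H^g$. Thus for every $G$-conjugate $K$ of $H$ not containing $x$, the coset $xK$ consists entirely of odd-order elements.

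Let $N$ denote the normal closure of $H$ in $G$. The crux of the proof is to bootstrap the above to the stronger statement $xN\subseteq T$; once this is established, the theorem of Guralnick and Navarro quoted from \cite{GN}, applied to the normal subgroup $N$ and the element $x$, yields that $N$ is solvable, and hence $H\leq N$ is solvable too. To obtain $xN\subseteq T$, one writes an arbitrary $n\in N$ as a product $n=h_1^{g_1}\cdots h_k^{g_k}$ of conjugates of elements of $H$ and argues by induction on $k$ that $xn$ has odd order. The inductive step attempts to apply the previous paragraph's observation to the odd-order partial product $m=x\,h_1^{g_1}\cdots h_{i-1}^{g_{i-1}}$ and the conjugate $K=H^{g_i}$, concluding that $m\cdot h_i^{g_i}\in T$ provided $m\notin K$. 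The principal obstacle is handling the case in which a partial product $m$ happens to fall inside a conjugate $H^{g_i}$: here the direct observation no longer applies, and one must either reshuffle the expression of $n$ into a more favorable sequence of conjugates, or invoke a more delicate structural argument controlling when the $G$-conjugacy class of an odd-order element meets a conjugate of $H$, in order to close the induction.
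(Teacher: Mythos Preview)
Your Case~1 is correct and matches the paper's Lemma~\ref{lem:p-singular}: if every odd-order element of $G$ lies in $H$, then the subgroup they generate is $O^2(G)=O^2(H)$, which is normal in $G$ with $2$-group quotient.

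Case~2, however, has a genuine gap that you identify but do not close, and the difficulty is not merely technical. Suppose the odd-order element $x\in G\setminus H$ you select happens to lie in the normal closure $N$; for instance, $x$ could equal $h^g$ for some odd-order $h\in H$ and $g\in G$ with $h^g\notin H$. Then $xN=N$, so your target statement $xN\subseteq T$ would force every element of $N$, and in particular every element of $H$, to have odd order. But you are in the branch where you must prove $H$ solvable, and if $H$ is nonsolvable then $|H|$ is even by Feit--Thompson. Hence the assertion $xN\subseteq T$ is simply \emph{false} in this situation, and no reshuffling of the factorisation of $n$ can rescue it. Your induction does go through cleanly when $x\notin N$ (each partial product lies in the coset $xN$, which is disjoint from $N$ and hence from every $H^{g_i}$), but the residual subcase $x\in N\setminus H$---equivalently, $G/N$ is a $2$-group while $N\setminus H$ still contains odd-order elements---is precisely where the scheme collapses.

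The paper takes a much longer and structurally different route (its Theorem~\ref{th:odd-order}): a minimal-counterexample argument first reduces to $H=O^2(H)$ with trivial core, then to $G$ a nonabelian simple group, and finally derives a contradiction via products of conjugacy classes and the Basic Covering Theorem of Arad--Stavi--Herzog. Only after this establishes $O^2(H)\trianglelefteq G$ does the paper apply the Guralnick--Navarro result, and then to the \emph{normal} subgroup $O^2(H)$ rather than to the normal closure $N$.
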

As a consequence, we obtain the following.

\begin{corol}\label{cor1} Let $G$ be a finite group and let $H$ be a proper subgroup of $G$.  Suppose that for every $x\in G\setminus H$, all elements in the coset $xH$ have the same order.  Then  either $H$ is solvable or the following hold.
\begin{enumerate}[$(a)$]
\item $O^2(G)\leq H$ and $H$ is subnormal in $G$.
\item Every element in $G\setminus H$ is a $2$-element.
\item The pair $(N_G(H),H)$ is an equal order pair.
 \end{enumerate}
\end{corol}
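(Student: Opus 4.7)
The plan is to derive Corollary \ref{cor1} directly from Theorem \ref{th:odd-order-coset}. The hypothesis that every coset $xH$ (for $x \in G \setminus H$) consists of elements of order $|x|$ is strictly stronger than the odd-order hypothesis of Theorem \ref{th:odd-order-coset}, so I immediately obtain that either $H$ is solvable---giving the first alternative of the corollary---or $O^2(H) \trianglelefteq G$ and $G/O^2(H)$ is a $2$-group. I would then work under the latter assumption and extract the three conclusions in turn.

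For part $(a)$, since $G/O^2(H)$ is a $2$-group and $O^2(G)$ is the smallest normal subgroup of $G$ with $2$-group quotient, minimality gives $O^2(G) \leq O^2(H) \leq H$. For subnormality, $H/O^2(H)$ embeds into the nilpotent $2$-group $G/O^2(H)$ and is therefore subnormal there; pulling a subnormal series back through the correspondence theorem then yields a subnormal series from $H$ to $G$.

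For part $(b)$, I would take $x \in G \setminus H$ and use the standard commuting decomposition $x = yz = zy$ with $y, z \in \langle x \rangle$, where $y$ has odd order and $z$ has $2$-power order. Since $G/O^2(H)$ is a $2$-group, every element of odd order in $G$ lies in $O^2(H) \leq H$; in particular $y \in H$. Then $z = y^{-1}x \notin H$, so the hypothesis applied to the coset $zH$ forces all its elements to share the order of $z$, which is a power of $2$. Since $x = zy \in zH$, the element $x$ is a $2$-element.

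For part $(c)$, the subnormality from $(a)$ together with $H \neq G$ forces $H < N_G(H)$, and $H \trianglelefteq N_G(H)$ by definition of the normalizer. The equal order hypothesis on $G \setminus H$ restricts automatically to $N_G(H) \setminus H \subseteq G \setminus H$, making $(N_G(H),H)$ an equal order pair. The key step---and the only one that is not essentially formal---is Theorem \ref{th:odd-order-coset} itself, which I am assuming; everything beyond it is routine manipulation of the quotient $G/O^2(H)$, with the one place requiring mild care being the commuting decomposition in $(b)$, where one must verify that $y \in H$ simultaneously excludes $z$ from $H$ and places $x$ inside $zH$.
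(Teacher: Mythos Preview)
Your proposal is correct and follows essentially the same route as the paper's proof: both invoke Theorem~\ref{th:odd-order-coset} to obtain the dichotomy, then deduce $O^2(G)\leq O^2(H)\leq H$ and subnormality from the $2$-group quotient, argue part~(b) via the commuting $2$-part/$2'$-part decomposition (showing the $2'$-part lies in $H$ so the coset is that of the $2$-part), and obtain part~(c) from subnormality forcing $H<N_G(H)$. The only cosmetic difference is that the paper phrases subnormality via $G/O^2(G)$ rather than $G/O^2(H)$, which is immaterial.
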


Using a similar argument to that employed in the proof of Theorem~\ref{th:odd-order-coset}, we obtain the following variant of the Baer-Suzuki theorem. Recall that if $p$ is a prime, then an element  $g\in G$ is called $p$-regular if its order is not divisible by $p.$

\begin{theorem}
    \label{cor2}
Let $G$ be a finite group and let $p$ be a prime. Suppose that  $x\in G$ is a $p$-element such that $xy$ is $p$-regular for every nontrivial $p$-regular element $y\in G$. Then $x\in O_p(G)$.
\end{theorem}

This result may be known to experts. In fact, it has been shown that if $x\in G$ and $xy$ is $p$-regular for every $p$-regular element $y\in G$, then $x\in O_{p'}(G)$. This was independently proved by Robinson \cite{Robinson} and Flavell \cite{Flavell}. Robinson's proof is character-theoretic, while Flavell's argument is  group-theoretic. Both proofs do not use the classification of finite simple groups. Our proof of Theorem \ref{cor2} is also classification-free. Theorem \ref{cor2} can be considered a dual of Theorem D in \cite{BLM}, where it is shown that if $x\in G$ is  $p$-regular such that the order of $xy$ is divisible by $p$ for all nontrivial $p$-elements $y\in G$, then $x\in O_{p'}(G)$.  This is proved modulo a conjecture on finite almost simple groups.

\medskip
\noindent
\textbf{Notation.} Our notation is standard. We follow \cite{Isaacsbook} for character theory of finite groups. If $G$ is a finite group, then $\Irr(G)$ denotes the set of its complex irreducible characters. If $N$ is a subgroup of $G$ and $\theta\in \Irr(N)$, then $\Irr(G\mid\theta)$  denotes the set of  irreducible constituents of the induced character $\theta^G$. By the Frobenius reciprocity, this is the set of  irreducible characters $\chi\in\Irr(G)$ such that the restriction $\chi_N$ contains $\theta$ as a  constituent. In this case, we say that $\chi$ `lies over' $\theta$ and $\theta$ `lies under' $\chi$. If $g\in G$, then we can uniquely write $g=g_pg_{p'}$, where $g_p,g_{p'}\in \la g\ra$ have orders a power of $p$ and coprime to  $p$, respectively. For a proper subgroup $H$ of $G$, we define $\Delta_H(G)=G \setminus \cup_{g\in G}H^g$, where $H^g=g^{-1}Hg$ is a conjugate of $H$ by an element $g\in G$.  Finally, we write $\pi(G)$ for the set of distinct prime divisors of the order of $G$.

\section{Preliminaries}

In this section, we collect and prove some auxiliary results that will be used in the proofs of the main theorems. We begin by describing some  properties of groups that satisfy condition $\CI$.

\begin{lem}\emph{(\cite[Lemma 1.3]{Kuisch-vanderWaall}).}\label{lemma1.3}
Let $G$ be a finite group and let $H$ be a subgroup of $G$. Suppose that the pair $(G,H)$ satisfies condition $\CI$. Let $M$ be a normal subgroup of $G$ such that  $H\not\subseteq M$. Then $M <H$ and $(G/M, H/M)$ satisfies condition $\CI$. 
\end{lem}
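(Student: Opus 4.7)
The plan is to prove the lemma in two stages: first that $M \subseteq H$ (whence $M < H$ in view of $H \not\subseteq M$), and second that the quotient pair $(G/M, H/M)$ inherits condition $\CI$. The main obstacle sits in the first stage; the second is a routine bookkeeping argument once the inclusion $M \subseteq H$ is available.

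For the first stage, the approach is to choose a good test character. Since $H \not\subseteq M$ we have $H\cap M < H$, so $H/(H\cap M)$ is nontrivial, and inflating any nontrivial irreducible of $H/(H\cap M)$ yields a nontrivial $\theta \in \Irr(H)$ with $H \cap M \subseteq \ker\theta$. Condition $\CI$ applied to $(G,H)$ then gives $\theta^G = a\chi$ for some positive integer $a$ and $\chi \in \Irr(G)$. I claim $M \subseteq \ker\chi$. By Mackey's formula, the identity double coset contributes the summand $(\theta|_{M\cap H})^M = \theta(1) \cdot (1_{M\cap H})^M$ to $(\theta^G)|_M$, using that $\theta$ is trivial on $M \cap H$. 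Since the permutation character $(1_{M\cap H})^M$ contains $1_M$ with multiplicity one, $1_M$ appears as an irreducible constituent of $(\theta^G)|_M = a\chi|_M$, and hence of $\chi|_M$; the only arithmetic point is that the multiplicity of $1_M$ in $\chi|_M$ is a nonnegative integer with $a\cdot(\text{mult}) \geq \theta(1) \geq 1$, so the multiplicity is genuinely at least one. As $M$ is normal in $G$, Clifford's theorem forces $\chi|_M$ to be a sum of $G$-conjugates of a single irreducible of $M$; since $1_M$ is $G$-invariant, we conclude $\chi|_M = e \cdot 1_M$ and hence $M \subseteq \ker\chi$.

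To extract $M \subseteq H$ from $M \subseteq \ker\theta^G$, I would use the induced character formula
\[
\theta^G(m) = \frac{1}{|H|}\sum_{g \in G,\ g^{-1}mg \in H} \theta(g^{-1}mg).
\]
For $m \in M$, each term $g^{-1}mg$ in the sum lies in $H \cap M$ (since $M$ is normal), and $\theta$ takes the constant value $\theta(1)$ there. Setting $S(m) = \{g \in G : g^{-1}mg \in H\}$, this gives $\theta^G(m) = \theta(1)|S(m)|/|H|$ and $\theta^G(1) = \theta(1)|G|/|H|$, so the equation $\theta^G(m) = \theta^G(1)$ forces $S(m) = G$, i.e., the whole conjugacy class $m^G$ lies in $H \cap M$. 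Running this over every $m \in M$ yields $M \subseteq H \cap M \subseteq H$, and combined with $H \not\subseteq M$ we obtain $M < H$.

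For the second stage, any nontrivial $\bar\theta \in \Irr(H/M)$ inflates to a nontrivial $\theta \in \Irr(H)$ with $M \subseteq \ker\theta$. By $\CI$ for $(G,H)$, $\theta^G = a\chi$ for some $\chi \in \Irr(G)$. Because $\theta$ is trivial on $M$ and $M$ is normal in $G$, the induction formula shows $\theta^G$ is trivial on $M$, so $M \subseteq \ker\chi$ and $\chi$ descends to some $\bar\chi \in \Irr(G/M)$. The standard commutation of induction and inflation then gives $\theta^G = \mathrm{Inf}_{G/M}^G(\bar\theta^{G/M})$; comparing with $a\chi = \mathrm{Inf}_{G/M}^G(a\bar\chi)$ produces $\bar\theta^{G/M} = a\bar\chi$, so $\bar\theta$ induces homogeneously to $G/M$.
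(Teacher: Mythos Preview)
The paper does not supply its own proof of this lemma; it is quoted verbatim from \cite[Lemma~1.3]{Kuisch-vanderWaall} without argument. Your proof is correct and self-contained. The key manoeuvre---choosing a nontrivial $\theta\in\Irr(H)$ trivial on $H\cap M$, using Mackey plus Clifford to see that $M\subseteq\ker\chi$ where $\theta^G=a\chi$, and then reading off $m^G\subseteq H$ for every $m\in M$ from the induced-character formula---is exactly the natural character-theoretic route, and the quotient step is handled correctly via the standard commutation of inflation with induction.
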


\begin{lem} \emph{(\cite[Theorem 3.2]{Kuisch-vanderWaall}).}\label{lem:Cam}
Let $G$ be a finite group and let $H$ be a subgroup of $G$.	Suppose that  the pair $(G,H)$ satisfies condition $\CI$. Let $N$ be the normal closure of $H$ in $G$. Then $(G,N)$ is a Camina pair. 
\end{lem}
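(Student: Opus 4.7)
The plan is to verify the Chillag--Macdonald character-theoretic criterion for Camina pairs: since $N$ is normal in $G$ by construction, $(G,N)$ is a Camina pair if and only if every $\chi\in\Irr(G)$ with $N\not\leq\Kernel\chi$ vanishes identically on $G\setminus N$. I would prove this directly, without any induction on $|G|$ and without appealing to Lemma~\ref{lemma1.3}.

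Fix $\chi\in\Irr(G)$ with $N\not\leq\Kernel\chi$. The key first step is to observe that in fact $H\not\leq\Kernel\chi$: since $\Kernel\chi$ is normal in $G$, if it contained $H$ then it would contain every conjugate $H^g$, hence also the normal closure $N=\langle H^g\colon g\in G\rangle$, contradicting the hypothesis. Therefore $\chi_H$ is not a multiple of $1_H$ and admits some nontrivial irreducible constituent $\psi\in\Irr(H)$. I would then apply condition $\CI$ to $\psi$: there exist $\xi\in\Irr(G)$ and a positive integer $a$ with $\psi^G=a\xi$. Because $\chi$ lies over $\psi$, it is an irreducible constituent of $\psi^G$ by Frobenius reciprocity, and the homogeneity of $\psi^G$ forces $\xi=\chi$, so $\psi^G=a\chi$. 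Finally, since $N$ is normal in $G$ and contains $H$, every $G$-conjugate of $H$ lies in $N$; in particular $\cup_{g\in G}H^g\subseteq N$, and the standard formula for induced characters makes $\psi^G$ vanish on all of $G\setminus N$. Dividing by $a$ then yields $\chi(x)=0$ for every $x\in G\setminus N$, completing the verification.

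The main obstacle, and essentially the only idea in the proof, is the opening observation that $H\not\leq\Kernel\chi$; this is where being the \emph{normal closure} of $H$ (rather than merely some normal overgroup of $H$) is used to transfer non-containment from $N$ down to $H$ and hence obtain a nontrivial constituent of $\chi_H$ to feed into $\CI$. Everything afterwards is a routine package of Frobenius reciprocity, the homogeneity supplied by $\CI$, and the vanishing of induced characters off the union of conjugates of the inducing subgroup.
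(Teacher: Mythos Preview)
The paper does not supply a proof of this lemma; it is quoted from \cite{Kuisch-vanderWaall} without argument, so there is no in-paper proof to compare against. Your approach is the natural character-theoretic one and the main line of reasoning is correct.

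There is one omission worth flagging. The Chillag--Macdonald criterion you invoke presupposes that $N$ is a \emph{proper} normal subgroup of $G$, and you never verify $N<G$; this is part of what it means for $(G,N)$ to be a Camina pair and is not automatic from the definition of the normal closure. Fortunately it drops out of your own computation. You have in fact shown that every $\chi\in\Irr(G)$ with $H\not\leq\Kernel\chi$ vanishes on the nonempty set $\Delta_H(G)=G\setminus\bigcup_{g\in G}H^g$. If $N=G$, then $\Irr(G/N)=\{1_G\}$, so every nontrivial $\chi\in\Irr(G)$ vanishes on $\Delta_H(G)$; evaluating the regular character at any $1\neq x\in\Delta_H(G)$ then yields
\[
0=\rho_G(x)=1_G(x)+\sum_{\chi\neq 1_G}\chi(1)\chi(x)=1,
\]
a contradiction. (Compare Lemma~\ref{lemma11}, where the analogous fact under condition $\F$ is isolated as a separate step before the Camina-pair conclusion in Lemma~\ref{lemma8}.) With this one-line addition your proof is complete.
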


\begin{lem}\label{thm4.1}
Let $G$ be a finite group and let $H$ be a subgroup of $G$.	Suppose that  the pair $(G,H)$ satisfies condition $\CI$ and that $H$ is not normal in $G$. Let $N$ be the normal closure of $H$ in $G$. Then $N$ is nilpotent.
\end{lem}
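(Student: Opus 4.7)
The plan is short: combine Lemma \ref{lem:Cam} with the classical structure theorem for Camina pairs, and finish off each case by either Thompson's theorem or triviality.

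First, since $(G,H)$ satisfies condition $\CI$, Lemma \ref{lem:Cam} gives that $(G,N)$ is a (classical) Camina pair, where now $N$ is a genuine proper normal subgroup of $G$. Camina's original structure theorem \cite[Theorem 2]{Camina}, already quoted in the introduction, then splits into three possibilities: $(a)$ $G$ is a Frobenius group with Frobenius kernel $N$, $(b)$ $N$ is a $p$-group for some prime $p$, or $(c)$ $G/N$ is a $p$-group for some prime $p$. In case $(a)$, Thompson's theorem on Frobenius kernels yields that $N$ is nilpotent; in case $(b)$, $N$ is obviously nilpotent.

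The hypothesis that $H$ is not normal in $G$ is meant to kill case $(c)$. Concretely, since $H$ is nilpotent by \cite[Theorem 3.1]{Kuisch-vanderWaall} and $H \lneq N$, I would argue as follows. Suppose for contradiction that $(c)$ holds with $N$ not a $p$-group, and let $K$ be the (nontrivial) Hall $p'$-part of $N$; since $G/N$ is a $p$-group, a Frattini-type argument gives $G = N \cdot N_G(K) = K \cdot N_G(K)$. Because $H$ is nilpotent, its Hall $p'$-subgroup $H_{p'}$ is contained in a $G$-conjugate of $K$; replacing $H$ by a conjugate, assume $H_{p'} \leq K$. Taking a non-principal linear character $\theta$ of $H_{p'}$, extending it across $H_p$ to a character $\theta^*\in\Irr(H)$, and applying condition $\CI$ to $\theta^*$, I would use Mackey and the fact that the distinct $G$-conjugates of $H$ (which exist because $H$ is not normal) produce incompatible constituents of $(\theta^*)^G$, contradicting homogeneity. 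This forces case $(c)$ to collapse back into $(a)$ or $(b)$, and in every surviving case $N$ is nilpotent.

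The main obstacle is this last step: making the contradiction in case $(c)$ precise. An alternative and perhaps cleaner route is simply to cite the stronger result of Kuisch and van der Waall (quoted in the introduction), which asserts that under the standing hypotheses either $N$ is a $p$-group or $G$ is Frobenius with kernel $N$; both alternatives yield $N$ nilpotent, so the lemma follows without having to re-analyze case $(c)$ from scratch.
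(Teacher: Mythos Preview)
Your ``alternative and perhaps cleaner route'' in the final paragraph is precisely what the paper does: it cites \cite[Theorem 4.1]{Kuisch-vanderWaall} for the dichotomy ($N$ is a $p$-group or $G$ is Frobenius with kernel $N$) and then invokes Thompson's theorem. So the proof you ultimately land on is correct and identical to the paper's.

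The primary route you sketch, however, has a real gap at case~$(c)$. Your Mackey argument is not made precise, and there are concrete obstacles: you assume $H_{p'}$ admits a non-principal linear character, but nothing rules out $H$ being a $p$-group even when $N$ is not; you invoke a ``Frattini-type argument'' for a Hall $p'$-subgroup $K$ of $N$ without knowing $N$ is solvable (so $K$ need not exist a priori); and the claim that distinct conjugates of $H$ force inhomogeneous induction is asserted rather than proved. If you wanted to eliminate case~$(c)$ without simply citing \cite[Theorem 4.1]{Kuisch-vanderWaall}, the cleaner line is the one the paper uses later in Lemma~\ref{lemma12} for condition~$\F$: from the Camina pair $(G,N)$ with $G/N$ a $p$-group, one gets a normal $p$-complement $M$ of $G$ by \cite[Theorem 4.13]{Lewis}; then $M\leq N$, and since $H$ is not normal, Lemma~\ref{lemma1.3} forces $M<H$, so $(G/M,H/M)$ satisfies $\CI$ with $G/M$ a $p$-group, whence $H/M\unlhd G/M$ (the $\CI$ analogue of Lemma~\ref{lemma9}), a contradiction. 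That is a genuine argument; your Mackey sketch is not.
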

\begin{proof}
	By \cite[Theorem 4.1]{Kuisch-vanderWaall}, either $N$ is a $p$-group for some prime $p$ or $G$ is a Frobenius group with Frobenius kernel $N.$ Clearly $N$ is nilpotent if it is a $p$-group. If  $N$ is a Frobenius kernel, then it is nilpotent by Thompson's theorem \cite[Theorem 1]{Thompson}.  
\end{proof}

\begin{lem} \label{Hnormal}
Let $G$ be a finite group and let $H$ be a subgroup of $G$.	Suppose that the pair $(G,H)$ satisfies condition $\CI$ and $H$ is not normal in $G$. Let $N$ be the normal closure of $H$ in $G$. Then  $H$ is a normal subgroup of $N$ and $N/H$ is abelian. 
\end{lem}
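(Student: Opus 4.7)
The plan is to apply Lemma \ref{lemma1.3} with $M = [N,N]$, the commutator subgroup of $N$, in order to show that $[N,N] \subseteq H$. Once this containment is established, both conclusions of the lemma follow at once: $H/[N,N]$ will be a subgroup of the abelian quotient $N/[N,N]$ and hence normal in it, which pulls back to give $H$ normal in $N$; and $N/H$, being a further quotient of $N/[N,N]$, will automatically be abelian.

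To apply Lemma \ref{lemma1.3}, I need to check that $[N,N]$ is a normal subgroup of $G$ and that $H \not\subseteq [N,N]$. The normality of $[N,N]$ in $G$ is automatic, since $[N,N]$ is characteristic in $N$ and $N$ is normal in $G$. For the second condition, I will first invoke Lemma \ref{thm4.1} to conclude that $N$ is nilpotent; since $H$ is nontrivial and contained in $N$, the group $N$ itself is nontrivial, and a nontrivial nilpotent group is never perfect, so $[N,N] < N$.

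The key remaining step is then to verify $H \not\subseteq [N,N]$. I plan to argue by contradiction: if $H$ were contained in $[N,N]$, then because $[N,N]$ is normal in $G$, every conjugate $H^g$ would also lie in $[N,N]$, and hence so would the normal closure of $H$ in $G$. But this normal closure is $N$ by hypothesis, so we would get $N \subseteq [N,N]$, contradicting $[N,N] < N$. Hence Lemma \ref{lemma1.3} delivers $[N,N] < H$, which is what was needed.

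I do not anticipate any serious obstacle. The only substantive input beyond the setup is Lemma \ref{thm4.1}: it is exactly the nilpotency of $N$ that rules out the pathological possibility $N = [N,N]$, and without that, the contradiction in the main step would not be available.
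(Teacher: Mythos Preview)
Your proof is correct and follows essentially the same route as the paper's: both use Lemma~\ref{thm4.1} to get $N$ nilpotent (hence $N' \lneq N$), observe that $N'$ is characteristic in $N$ and therefore normal in $G$, deduce $H \not\subseteq N'$ from the minimality of the normal closure $N$, and then apply Lemma~\ref{lemma1.3} to conclude $N' < H$. You have simply spelled out the minimality argument and the passage from $N' \leq H$ to $H \unlhd N$ in a bit more detail.
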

\begin{proof}
	From Lemma \ref{thm4.1},  $N$ is nilpotent, which implies that $N'$ is a proper characteristic subgroup of $N$. Note that $N$ is nontrivial as it contains $H>1$. Since $N$ is normal in $G$, $N'$ is also normal in $G$. In addition, as $N$ is the smallest normal subgroup of $G$ containing $H$, $H\nsubseteq N'$.  Lemma \ref{lemma1.3} yields that  $N' < H$. Thus $H$ is a normal subgroup of $N$ and  $N/H$ is abelian. 
\end{proof}

Now suppose the pair $(G,H)$ satisfies condition $\F$, where $H$ is a subgroup of a finite group $G$.  We proceed to establish a sequence of results that will lead to the proof of the sufficient direction of Theorem \ref{thm1}. These results are analogous to those obtained by Kuisch and van der Waall, as discussed above.

\begin{lem} \label{lemma6}
Let $G$ be a finite group and let $H$ be a subgroup of $G$.	Suppose that the pair $(G,H)$ satisfies condition $\F$. Let $M$ be a normal subgroup of $G$ such that $H\nsubseteq M$. Then $M < H$. Moreover, $(G/M, H/M)$ satisfies condition $\F$. 
\end{lem}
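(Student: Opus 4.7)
The plan is to treat the two assertions separately. First, I would show that $M \subseteq H$ by contradiction: suppose there exists some $m \in M \setminus H$. Since $m \in G \setminus H$, condition $\F$ applied with $x = m$ says that $m$ is conjugate in $G$ to $mh$ for every $h \in H$. Because $M$ is normal in $G$, every $G$-conjugate of $m$ stays in $M$, so $mh \in M$, whence $h = m^{-1}(mh) \in M$. As $h \in H$ was arbitrary, this forces $H \subseteq M$, contradicting the hypothesis $H \not\subseteq M$. Thus $M \subseteq H$; and since $H \not\subseteq M$, the containment must be strict, giving $M < H$.

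Next, for the quotient assertion, I would verify the defining clauses of condition $\F$ for the pair $(G/M, H/M)$. Since $1 < M < H < G$, we have $1 < H/M < G/M$, which handles the size requirements. For the conjugacy clause, let $\bar{x} \in G/M \setminus H/M$ and pick a representative $x \in G$. Because $M \subseteq H$, the coset $xM$ lies in $H/M$ if and only if $x \in H$, so $\bar{x} \notin H/M$ forces $x \in G \setminus H$. For any $\bar{h} = hM \in H/M$, condition $\F$ in $G$ tells us that $x$ and $xh$ are $G$-conjugate, and passing to the quotient yields that $\bar{x}$ and $\bar{x}\bar{h}$ are $G/M$-conjugate, as required.

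The main obstacle, such as it is, lies in the first step: recognizing that normality of $M$ converts the conjugacy statement about $m$ supplied by condition $\F$ into the membership statement forcing $h \in M$. Once $M \subseteq H$ is in hand, the quotient assertion is purely formal and reduces to checking that each clause of $\F$ is preserved under reduction modulo $M$. No character theory or nilpotency machinery is needed; the proof uses only the definition of $\F$ and the normality of $M$.
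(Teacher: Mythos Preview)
Your proof is correct and follows essentially the same approach as the paper: both take an element of $M\setminus H$, use condition $\F$ together with the normality of $M$ to force $H\subseteq M$, and then note that conjugacy passes to quotients. One cosmetic point: in the second paragraph you write ``$1 < M < H < G$'', but nothing guarantees $M\neq 1$; the conclusion $1 < H/M < G/M$ needs only $M < H$ (just proved) and $H < G$ (built into condition $\F$), so this does not affect the argument.
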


\begin{proof}
Suppose that $M\nsubseteq H$, and  let  $x\in M\setminus H$. Since $(G,H)$ satisfies condition $\F$ and $M$ is normal in $G$, we have $xH\subseteq x^G\subseteq M$. It follows that $H\subseteq x^{-1}M = M$, a contradiction. Therefore, $M\subseteq H$. Since $H\not\subseteq M$, we conclude that $M<H$, as required. The final claim follows immediately, as conjugacy is preserved under taking quotients. 
\end{proof}

\begin{lem} \label{corol7}
Let $G$ be a finite group and let $H$ be a subgroup of $G$.	Suppose that the pair $(G,H)$ satisfies condition $\F$. Then $\z(G)\leq H\leq G'$.  
\end{lem}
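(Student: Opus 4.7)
The plan is to establish the two containments $\z(G)\leq H$ and $H\leq G'$ independently, each by a short contradiction argument using only condition $\F$ together with Lemma \ref{lemma6}.

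For the inclusion $\z(G)\leq H$, I would suppose toward contradiction that some $z\in \z(G)$ lies outside $H$. Then $z\in G\setminus H$, so condition $\F$ yields $zH\subseteq z^G$. But $z$ is central, hence $z^G=\{z\}$, forcing $zh=z$ and therefore $h=1$ for every $h\in H$. This contradicts the assumption $H>1$ and establishes $\z(G)\leq H$.

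For the inclusion $H\leq G'$, I would argue by contradiction once more: if $H\not\subseteq G'$, then applying Lemma \ref{lemma6} to the normal subgroup $M=G'$ gives $G'<H$ and, moreover, the pair $(G/G', H/G')$ satisfies condition $\F$. Since $G/G'$ is abelian, every conjugacy class in $G/G'$ is a singleton; applied to condition $\F$ for $(G/G', H/G')$ together with any $\bar x\in G/G'\setminus H/G'$, this forces $\bar x\bar h=\bar x$, hence $\bar h=1$, for every $\bar h\in H/G'$. Thus $H/G'=1$, contradicting $G'<H$.

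Neither step presents a real obstacle: the two arguments rest on the single observation that condition $\F$ cannot hold nontrivially whenever every element of $G$ commutes with every element of $H$ (as is the case for central elements, or for any elements in an abelian quotient). The only input beyond the definition of $\F$ itself is Lemma \ref{lemma6}, which is precisely what lets us reduce the second containment to the abelian quotient case.
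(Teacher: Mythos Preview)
Your proof is correct. The first containment $\z(G)\leq H$ is argued exactly as in the paper. For the second containment $H\leq G'$, your route differs from the paper's: you invoke Lemma~\ref{lemma6} with $M=G'$ to pass to the abelian quotient $G/G'$ and then repeat the ``singleton conjugacy class'' argument there, whereas the paper proceeds directly. It picks any $y\in G\setminus H$, notes that condition $\F$ gives for each $h\in H$ some $g\in G$ with $yh=y^g$, and hence $h=y^{-1}y^g=[y,g]\in G'$.

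The paper's argument is slightly more economical (no appeal to Lemma~\ref{lemma6}) and in fact yields the marginally stronger statement that every element of $H$ is a single commutator in $G$. Your argument, on the other hand, has the conceptual advantage of unifying both inclusions under the same principle---condition $\F$ forces the subgroup to be trivial whenever the ambient group is abelian---and this viewpoint is exactly what drives the later Lemma~\ref{lemma9} for $p$-groups.
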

\begin{proof}
If $x$ is an element in $ \z(G)$ that is not in $H$, then by condition $\F$, $xH \subseteq x^G = \left\{x\right\}$, which implies $H = 1$, a contradiction. Hence, every element of $\z(G)$ is contained in $H$, and we establish the containment $\z(G)\leq H$. Next,  take an element $y\in  G\setminus H$. By the assumption that $(G,H)$ satisfies condition $\F$, we have  $yH \subseteq y^G$. That is, for each $h\in H$, there exists $g\in G$ such that $yh = y^g$, and hence $$h = y^{-1}y^g = [y,g] \in G'.$$ It follows that $H\leq G'$, as desired.   
\end{proof}

\begin{lem}\label{lemma9}
Let $G$ be a finite group and let $H$ be a subgroup of $G$. Suppose that the pair $(G,H)$ satisfies condition $\F$ and $G$ is a nilpotent group. Then $H$ is a normal subgroup of $G$.
\end{lem}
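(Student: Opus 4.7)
The plan is to induct on $|G|$ and collapse a central subgroup of order $p$ to invoke the induction hypothesis. The key ingredients are Lemma~\ref{corol7} (which gives $Z(G)\leq H$) and Lemma~\ref{lemma6} (which lets us pass to a quotient, provided $H$ is not contained in the normal subgroup being quotiented out).

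More concretely: since $G$ is a nontrivial $p$-group, $Z(G)\neq 1$, so pick an element $z\in Z(G)$ of order $p$ and let $Z=\langle z\rangle$, a normal subgroup of $G$ of order $p$. By Lemma~\ref{corol7} we have $z\in Z(G)\leq H$, so $Z\leq H$.

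Now split into cases. If $H=Z$, then $H$ is central in $G$, hence normal, and we are done. Otherwise $Z\subsetneq H$, so in particular $H\not\subseteq Z$. Lemma~\ref{lemma6} then applies and gives that $(G/Z,H/Z)$ still satisfies condition $\F$. Moreover $H/Z$ is a nontrivial proper subgroup of the $p$-group $G/Z$ (nontrivial because $Z\subsetneq H$, proper because $H<G$), and $|G/Z|<|G|$. By the induction hypothesis, $H/Z$ is normal in $G/Z$, and lifting back we conclude that $H$ is normal in $G$.

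I don't see a genuine obstacle here; the argument is essentially a standard ``shrink by a central subgroup of order $p$'' induction on $p$-groups, with the only subtle point being to check that the hypotheses of Lemma~\ref{lemma6} hold before quotienting, which is precisely why one must handle the degenerate case $H=Z$ separately. The smallest orders of $G$ compatible with $1<H<G$ are $|G|=p^2$, where $G$ is abelian and the conclusion is trivial; so no separate base case is really needed beyond observing that when $|G|\leq p$ the hypothesis is vacuous.
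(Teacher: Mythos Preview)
Your proof is correct and uses essentially the same ingredients as the paper's proof---Lemma~\ref{corol7} to get the center inside $H$ and Lemma~\ref{lemma6} to pass to a quotient. The only difference is organizational: the paper climbs the upper central series (quotienting by $Z_i$ at each step and invoking $G'\leq Z_{k-1}$ at the end), whereas you induct on $|G|$ peeling off a central subgroup of order $p$; your packaging is slightly cleaner since it avoids the separate endgame with $G'$.
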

\begin{proof}
	Since $G$ is nilpotent, it possesses an upper central series
	\begin{align*}
		1 = Z_0 < Z_1 = \z(G) <Z_2 <\ldots <  Z_{k-1}<Z_k = G.
	\end{align*}
	Because $Z_1 = \z(G)$ is a nontrivial normal subgroup of $G$, by Lemma \ref{corol7},  $Z_1\leq H$. If $Z_1 = H$, then the claim follows. Suppose instead that $Z_1<H$.   Then, by Lemma \ref{lemma6}, the pair $\left(G/Z_1,H/Z_1\right)$ satisfies condition $\F$. Applying Lemma \ref{corol7} to the pair $(G/Z_1, H/Z_1)$ yields $Z_2/Z_1=\z(G/Z_1)\leq H/Z_1$ and hence  $Z_2\leq H$. If $Z_2 = H$, then we are done. Otherwise, we continue the process:  the pair $(G/Z_2, H/Z_2)$ satisfies condition $\F$ so $Z_3/Z_2 = \z(G/Z_2) \leq H/Z_2$, and thus $Z_3\leq H$.  By repeating this argument, we either eventually reach  $H=Z_i$ for some $i$, in which case $H\unlhd G$ or else, we find that  $Z_{k-1}\leq H$.  Since $G'\leq Z_{k-1}\leq H$, $H$ is normal in $G$ as well.
\end{proof}

\begin{lem}\label{lemma11}
Let $G$ be a finite group and let $H$ be a subgroup of $G$.	Suppose that the pair $(G,H)$ satisfies condition $\F$ and that $H$ is not normal in $G$. Let $N$ be the normal closure of $H$  in $G$. Then $N=\cup_{g\in G}H^g$. Consequently,  $1<N<G$. 
\end{lem}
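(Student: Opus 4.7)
The plan is to set $U:=\bigcup_{g\in G}H^g$ and prove the equality $N=U$. Since $N$ is normal in $G$ and contains $H$, we automatically have $U\subseteq N$, so the task is to show the reverse inclusion. The key structural observation is that $U$ is closed under $G$-conjugation (if $y\in H^{g_0}$ then $y^k\in H^{g_0 k}\subseteq U$), hence $N\setminus U$ is a $G$-invariant subset. Moreover $H\subseteq U$, so every element of $N\setminus U$ automatically lies in $G\setminus H$.

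The heart of the argument is the following claim: if $y\in N\setminus U$, then $y\cdot H^g\subseteq y^G$ for every $g\in G$. To prove it, I would set $y':=y^{g^{-1}}$; by $G$-invariance of $N\setminus U$ this element still lies in $N\setminus U$, and in particular $y'\in G\setminus H$. Condition $\F$ then yields $y'H\subseteq (y')^G=y^G$, and conjugating the entire inclusion by $g$ gives $yH^g=(y'H)^g\subseteq y^G$.

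Arguing by contradiction, suppose there exists $y\in N\setminus U$. I would iterate the claim: any product of the form $y\cdot h_1^{g_1}h_2^{g_2}\cdots h_k^{g_k}$ with $h_i\in H$ and $g_i\in G$ belongs to $y^G$, because at each step the partial product is conjugate to $y$, hence still lies in the $G$-invariant set $N\setminus U\subseteq G\setminus H$, so the claim can be reapplied. Since $N$ is generated by $\bigcup_{g\in G}H^g$, every element of $N$ is such a product, forcing $yN\subseteq y^G\subseteq N\setminus U$. But $y\in N$ gives $yN=N$, whence $N\cap U=\emptyset$, contradicting $1\in H\subseteq N\cap U$. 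Therefore $N=U$.

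For the consequence $1<N<G$, the inequality $1<N$ is immediate from $1<H\leq N$, while $N<G$ follows from the classical counting bound
\[
\Bigl|\bigcup_{g\in G}H^g\Bigr|\leq [G:N_G(H)](|H|-1)+1\leq [G:H](|H|-1)+1<|G|,
\]
valid whenever $H$ is a proper subgroup of $G$. The main obstacle is verifying that the iteration really goes through: at each multiplication one needs the new partial product to satisfy the hypothesis of $\F$, and this is exactly what the $G$-invariance of $N\setminus U$ (together with $H\subseteq U$) delivers, so no bookkeeping subtleties arise beyond this one observation.
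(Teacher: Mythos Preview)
Your argument is correct, but it proceeds along a different route than the paper's. The paper proves the equality $N=U$ by showing \emph{directly} that $U=\bigcup_{g\in G}H^g$ is closed under multiplication: given $h_1^{g_1}h_2^{g_2}$, it rewrites this as $(h_1^{x}h_2)^{g_2}$ with $x=g_1g_2^{-1}$ and splits into the cases $h_1^{x}\in H$ and $h_1^{x}\notin H$, applying condition $\F$ in the second case to land back in $U$. Since $U$ is visibly closed under inverses, $U$ is a subgroup, hence equals the group it generates, namely $N$. You instead argue by contradiction: assuming some $y\in N\setminus U$, you iterate the implication $y\in G\setminus H\Rightarrow yH^g\subseteq y^G$ (using $G$-invariance of $N\setminus U$ to keep each partial product in the hypothesis range) to conclude $yN\subseteq y^G\subseteq N\setminus U$, contradicting $yN=N$. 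Your iteration is essentially the mechanism the paper later uses in Lemma~\ref{lemma8} to show $(G,N)$ is a Camina pair, so in effect you have imported that argument one lemma early. The paper's approach is slightly more economical here (a single two-case computation rather than an induction), while yours makes the connection to the subsequent Camina-pair result more transparent. For the strict inequality $N<G$, the paper simply notes that $\Delta_H(G)=G\setminus U$ is nonempty for any proper subgroup $H$; your explicit counting bound is a standard way to justify the same fact.
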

\begin{proof}
	By definition, the normal closure $N$ of $H$ in $G$ is generated by all conjugates of $H$, i.e., by the set $\cup_{g\in G}H^g$. To prove the lemma, it suffices to show that this union is closed under multiplication. Let $g_1,g_2\in G$ and $h_1,h_2\in H$. Consider the product $$h_1^{g_1}h_2^{g_2}=g_1^{-1}h_1g_1g_2^{-1}h_2g_2.$$ Set $x=g_1g_2^{-1}$. We rewrite this as:
	\[	 h_1^{g_1}h_2^{g_2}=g_2^{-1}(g_2g_1^{-1}h_1g_1g_2^{-1}h_2)g_2 = \left(h_1^{x}h_2\right)^{g_2}. \]
	If $h_1^{x} \in H$, then 
	\[	h_1^{g_1}h_2^{g_2}=\left(h_1^{x}h_2\right)^{g_2}  \in (Hh_2)^{g_2} = H^{g_2} \subseteq\cup_{g\in G}H^g.\]
	If $h_1^{x} \notin H$, then by condition (F), 
	\[	h_1^{g_1}h_2^{g_2} = \left(h_1^{x}h_2\right)^{g_2} \in \left(h_1^{x}H\right)^{g_2}\subseteq \left(h_1^{x}\right)^G = h_1^{G} \subseteq \cup_{g\in G}H^G.\]
	In both cases, the product lies in $\cup_{g\in G}H^g$, showing that this set is closed under multiplication.  Since it is also clearly closed under taking inverses, it is a subgroup of $G$. Therefore, $N=\cup_{g\in G}H^g$, as desired. Moreover, since $H$ is a nontrivial proper subgroup of $G$, the subset $\Delta_H(G) = G\setminus \cup_{g\in G}H^g$ is nonempty. Hence, $G\setminus N$ is nonempty, and since $N$ contains $H$, we obtain $1<N<G$. 
\end{proof}

\begin{lem}\label{lemma8}
Let $G$ be a finite group and let $H$ be a subgroup of $G$.	Suppose that the pair $(G,H)$ satisfies $\F$. Let $N$ be the normal closure of $H$ in $G$. Then $(G,N)$ is a Camina pair. 
\end{lem}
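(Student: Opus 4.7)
The plan is to use the explicit description of the normal closure from Lemma \ref{lemma11}, namely $N = \bigcup_{g\in G} H^g$, and reduce the Camina-pair condition on $N$ back to condition $\F$ on $H$ by a conjugation trick. First I would dispose of the trivial case: if $H$ is already normal in $G$, then $N=H$ and condition $\F$ is literally the Camina pair condition, so there is nothing to do. So assume $H$ is not normal, and invoke Lemma \ref{lemma11} to get $1<N<G$ with $N=\bigcup_{g\in G}H^g$.

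Now fix $x\in G\setminus N$ and an arbitrary $n\in N$. Writing $n=h^g$ with $h\in H$ and $g\in G$, I would compute the conjugate
\[
(xh^g)^{g^{-1}} \;=\; g\,x\,(g^{-1}hg)\,g^{-1} \;=\; (gxg^{-1})\,h \;=\; x^{g^{-1}}\,h,
\]
so that $xn=xh^g$ is $G$-conjugate to $x^{g^{-1}}h$.

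The key point is that $x^{g^{-1}}$ lies in $G\setminus H$: since $N$ is normal in $G$ and $x\notin N$, also $x^{g^{-1}}\notin N\supseteq H$. Therefore condition $\F$ applies to the element $x^{g^{-1}}\in G\setminus H$ and produces an element of $G$ conjugating $x^{g^{-1}}$ to $x^{g^{-1}}h$. Combining this with the conjugacy $x\sim x^{g^{-1}}$ gives $x\sim x^{g^{-1}}h\sim xh^g=xn$, as required. Hence every element of the coset $xN$ is $G$-conjugate to $x$, which together with $N\trianglelefteq G$ and $1<N<G$ is exactly the statement that $(G,N)$ is a Camina pair.

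There is no serious obstacle here: once Lemma \ref{lemma11} is in hand the whole argument is a one-line conjugation identity followed by a direct application of $\F$. The only subtlety worth flagging carefully in the write-up is the verification that $x^{g^{-1}}\notin H$, which uses that $N$ — not merely $H$ — is normal in $G$; this is precisely why we needed the union description $N=\bigcup_g H^g$ rather than trying to work with cosets of $H$ directly.
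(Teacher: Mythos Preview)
Your proof is correct and is actually a bit slicker than the paper's. Both arguments begin identically (dispose of the normal case, then invoke Lemma~\ref{lemma11}), and both hinge on the observation that $x\notin N$ forces every $G$-conjugate of $x$ to lie outside $H$. The difference is in how the conclusion of Lemma~\ref{lemma11} is used. The paper only uses that $1<N<G$ and that $N$ is \emph{generated} by the conjugates $H^g$: it first shows the class $K=x^G$ satisfies $KH=K$, then deduces $Kh^g=(Kh)^g=K$ for each conjugate, and finally iterates over products of such conjugates to reach $KN=K$. You instead exploit the full strength of Lemma~\ref{lemma11}, namely that $N$ literally equals the \emph{union} $\bigcup_g H^g$, so every $n\in N$ is already a single conjugate $h^g$; one conjugation identity then reduces $xn\sim x$ to a direct application of $\F$ with no iteration needed. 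Your route is shorter; the paper's route has the minor advantage that it would still go through if one only knew $N=\langle H^g:g\in G\rangle$ and $N<G$, without the union description.
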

\begin{proof} If $H\unlhd G$, then $(G,H)$ is a Camina pair by definition. So, we may assume that $H$ is not normal in $G$. By Lemma \ref{lemma11}, $N$ is a nontrivial proper normal subgroup of $G$. Since $H\subseteq N\unlhd G$, it follows that  for every $x \in G\setminus N$, $x^t\not\in H$ for all $t\in G$.  By condition $\F$, for each $t\in G$, we have $x^tH\subseteq (x^t)^G = x^G$. Therefore, $$x^GH = \cup_{t\in G}x^tH\subseteq x^G,$$ so $x^GH = x^G$. Now let $K=x^G$. Then  $KH=K$, meaning that for every element $h\in H$, we have $Kh=K$. Consequently, for all $g\in G$,
	\begin{align*}
		Kh^g = K^gh^g = (Kh)^g = K^g = K.
	\end{align*}
Thus, $Kh^g = K$ for all $g\in G, h\in H$. Since every $n\in N$ is of the form  $h^g$ for some $h\in H$ and $g\in G$ by Lemma \ref{lemma11}, it follows that  $Kn = K$ for all $n\in N$, so  $KN = K$. Therefore, $xN \subseteq K = x^G$ for every $x\in G\setminus N$, and hence $(G,N)$ is a Camina pair.  
\end{proof}

\begin{lem}\label{lemma12}
Let $G$ be a finite group and let $H$ be a subgroup of $G$.	Suppose that the pair $(G,H)$ satisfies condition $\F$ and that $H$ is not normal in $G$. Let $N$ be the normal closure of $H$ in $G$. Then $N$ is nilpotent. Moreover, either $G$ is a Frobenius group with Frobenius kernel $N$ or $N$ is a $p$-group. 
\end{lem}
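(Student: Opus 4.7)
The plan is to reduce the statement to Camina's trichotomy applied to the Camina pair $(G,N)$ furnished by Lemma \ref{lemma8}. By Lemma \ref{lemma8}, the pair $(G,N)$ is a Camina pair, so \cite[Theorem~2]{Camina} gives one of three scenarios: (a) $G$ is a Frobenius group with kernel $N$; (b) $N$ is a $p$-group for some prime $p$; (c) $G/N$ is a $p$-group for some prime $p$. In cases (a) and (b) the conclusion is immediate: in case (a), $N$ is nilpotent by Thompson's theorem \cite[Theorem~1]{Thompson}, and in case (b), $N$ is trivially nilpotent.

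The core of the argument is to show that case (c) collapses into case (a) or case (b). First I would establish that every element of $G\setminus N$ is a $p$-element. Given $x\in G\setminus N$, write $x=x_px_{p'}$; since $G/N$ is a $p$-group, $x_{p'}\in N$, and hence $x_p=x\cdot x_{p'}^{-1}\in G\setminus N$. The Camina property gives $x_p\sim x_px_{p'}=x$, so $\mathrm{ord}(x)=\mathrm{ord}(x_p)$ and therefore $x_{p'}=1$. In particular every $p'$-element of $G$ lies in $N$. Combining this with the Camina centralizer bound $|C_G(u)|\leq[G:N]$ for $u\in G\setminus N$ (immediate from $|u^G|\geq|uN|=|N|$) and the fact that $[G:N]$ is a $p$-power, one concludes that $C_G(y)\leq N$ for every non-trivial $p'$-element $y\in N$.

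Next I would deduce nilpotency of $N$ (a standard consequence of being a Camina kernel, extractable from the Camina-pair literature). Writing $N=N_p\times N_{p'}$, I would invoke the classical fact that $G/N$ acts fixed-point-freely on $N/N'$ for any Camina pair $(G,N)$. The induced action of the $p$-group $G/N$ on the $p$-group $N_p/N_p'$ would then have to be fixed-point-free, which by the class equation forces $N_p/N_p'=1$ and, by nilpotency of $N_p$, gives $N_p=1$. Hence $N=N_{p'}$ is a $p'$-group. A Schur--Zassenhaus complement $K$ of $N$ in $G$ is a Sylow $p$-subgroup, and $C_G(y)\leq N$ for each $1\neq y\in N$ forces $K$ to act on $N$ without non-trivial fixed points; thus $G$ is Frobenius with kernel $N$, returning to case (a).

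The step I expect to be the main obstacle is rigorously establishing the fixed-point-free action of $G/N$ on $N/N'$. If it can be invoked as a known property of Camina pairs from \cite{Camina} or \cite{Chillag-Macdonald}, the argument is short; otherwise it must be derived from the identity $N=\{[x,g]:g\in G\}$ (valid for each $x\in G\setminus N$ by the Camina condition) via commutator calculus modulo $N'$.
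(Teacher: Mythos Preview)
Your reduction to Camina's trichotomy via Lemma~\ref{lemma8} is correct, and cases (a) and (b) are handled exactly as in the paper. The problem is your treatment of case (c), which contains two genuine gaps.

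First, the assertion that nilpotency of $N$ is ``a standard consequence of being a Camina kernel'' is not a theorem in the literature; Camina kernels are known to be solvable, but not nilpotent in general. Since nilpotency of $N$ is precisely the first conclusion of the lemma, invoking it here is circular, and without it the decomposition $N=N_p\times N_{p'}$ is unavailable. Second, the ``classical fact'' that $G/N$ acts fixed-point-freely on $N/N'$ for any Camina pair is false: take $G$ extraspecial of order $p^3$ and $N=\z(G)$; then $(G,N)$ is a Camina pair, $N'=1$, and $G/N$ acts trivially on $N/N'=N$. You acknowledge this step as the main obstacle, but it cannot be repaired in the generality you need.

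More tellingly, your case (c) argument never uses the hypothesis that $H$ is not normal in $G$; you are effectively trying to prove that case (c) of Camina's trichotomy is always subsumed by (a) or (b), which is not true. The paper's proof uses this hypothesis essentially: in case (c) it invokes \cite[Theorem~4.13]{Lewis} to obtain a normal $p$-complement $M$ of $G$ with $M\subseteq N$, then applies Lemma~\ref{lemma6} (which needs $H\not\subseteq M$, forced by non-normality of $H$) to get $M<H$. The quotient $G/M$ is then a $p$-group in which $(G/M,H/M)$ satisfies~$\F$, so Lemma~\ref{lemma9} gives $H/M\unlhd G/M$, contradicting $H\not\unlhd G$. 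You should replace your case (c) analysis with this quotient argument.
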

\begin{proof}
It follows from Lemma~ \ref{lemma8} that $(G,N)$ is a Camina pair. By \cite[Theorem 2]{Camina}, one of the following holds: 
	\begin{enumerate}
\item	$G$ is a Frobenius group with Frobenius kernel $N$; 
\item  $N$ is a $p$-group; 
\item  $G/N$ is a $p$-group, where $p$ is a prime. 
\end{enumerate} 

In the first two cases, $N$ is nilpotent (in the first case, this follows from Thompson's Theorem \cite[Theorem 1]{Thompson}). It remains to rule out the third possibility. Suppose, for contradiction, that  $G/N$ is a $p$-group but neither of the first two cases holds. That is, $N$ is not a $p$-group and $G$ is not a Frobenius group with Frobenius kernel $N$.  
	
Since $(G,N)$ is a Camina pair and $G/N$ is a $p$-group, it follows from \cite[Theorem 4.13]{Lewis} that  $G$ has a normal $p$-complement $M$. Since $G/N$ is a $p$-group,  we must have $M\subseteq N$. Furthermore, as $H$ is not normal in $G$, it follows that $M<H$. By Lemma \ref{lemma6}, $(G/M, H/M)$ also satisfies condition $\F$. But $G/M$ is a $p$-group, so  by Lemma \ref{lemma9}, $H/M\unlhd G/M$, which implies that $H\unlhd G$, a contradiction.
\end{proof}

\begin{lem}\label{lemma13}
Let $G$ be a finite group and let $H$ be a subgroup of $G$.	Suppose $(G,H)$ satisfies condition $\F$ and that $H$ is not normal in $G$. Let $N$ be the normal closure of $H$ in $G$. Then $H\unlhd N$ and $N/H$ is abelian. 
\end{lem}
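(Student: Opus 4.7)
The plan is to mirror the argument used for the analogous $\CI$-statement in Lemma~\ref{Hnormal}, using the $\F$-counterparts of the two ingredients that drove that proof: Lemma~\ref{lemma12} (which supplies nilpotence of $N$) in place of Lemma~\ref{thm4.1}, and Lemma~\ref{lemma6} (the quotient/containment lemma for $\F$) in place of Lemma~\ref{lemma1.3}.

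First I would invoke Lemma~\ref{lemma12} to conclude that $N$ is nilpotent. Since $N$ contains $H>1$, $N$ is nontrivial, and nilpotence forces the derived subgroup $N'$ to be a \emph{proper} characteristic subgroup of $N$. Because $N\unlhd G$, this makes $N'$ a normal subgroup of $G$ that is strictly contained in $N$.

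Next I would show $H\not\subseteq N'$. This is immediate from the minimality in the definition of the normal closure: if $H\subseteq N'$, then $N'$ is a normal subgroup of $G$ containing $H$ that is strictly smaller than $N$, contradicting the fact that $N$ is the smallest such subgroup. With $N'\unlhd G$ and $H\not\subseteq N'$ in hand, Lemma~\ref{lemma6} applies with $M=N'$ and gives $N'<H$.

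Finally, from $N'\leq H\leq N$ it follows at once that $H$ is normal in $N$ and that $N/H$, being a quotient of the abelian group $N/N'$, is abelian. The only step that required any thought was knowing which earlier lemma plays the role of its $\CI$-analog; no genuine obstacle arises, since Lemmas~\ref{lemma6} and~\ref{lemma12} have already done all the work.
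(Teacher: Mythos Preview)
Your proposal is correct and follows essentially the same argument as the paper's proof: invoke Lemma~\ref{lemma12} for nilpotence of $N$, deduce that $N'$ is a proper normal subgroup of $G$ not containing $H$, and conclude $N'<H$ (the paper states this last step directly from minimality of $N$ rather than citing Lemma~\ref{lemma6}, but the content is identical). Your write-up is slightly more explicit than the paper's, but there is no substantive difference.
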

\begin{proof}
	Since $N$ is nilpotent by Lemma \ref{lemma12}, its derived subgroup $N'$ is a proper characteristic subgroup of $N$. Since $N$ is normal in $G$, $N'\unlhd G$. Moreover, since $N$ is the smallest normal subgroup of $G$ containing $H$, we must have $N'<H$. Therefore, $H\unlhd N$ and $N/H$ is abelian.   
\end{proof}

Recall that for a normal subgroup $N$ of a finite group $G$, we define $\Irr(G\mid N)=\Irr(G)\setminus \Irr(G/N)$, that is, $\Irr(G\mid N)$ is the set of irreducible characters of $G$ whose kernels do not contain $N$. Since $N=\la H^g:g\in G\ra$, we see that $\Irr(G\mid N)=\Irr(G\mid H).$

In \cite[Theorem 2.1]{Ren-Lu-Li}, the authors showed that the pair $(G,H)$ satisfies condition $\CI$ if and only if for every $\chi\in\Irr(G\mid N)$, $$(1_H)^G(\chi-[\chi_H,1_H]1_G)=a(\chi)\chi,$$ where $[\chi_H,1_H]$ denotes the inner product of $\chi_H$ with the trivial character $1_H$, $a(\chi)$ is a positive integer and $N$ is the normal closure of $H$ in $G$. Furthermore, if $H\unlhd G$, then $H=N$ and the latter condition simplifies to: for every $\chi\in \Irr(G\mid N)$, $$(1_H)^G\chi=a(\chi)\chi,$$ where $a(\chi)$ is some positive integer. Recall that $(1_H)^G\chi=(\chi_H)^G$ for any $\chi\in\Irr(G).$

It turns out that if the pair $(G,H)$ satisfies the last condition above, then $H\unlhd G$. This is the content of Theorem 3.1 in \cite{Ren-Lu-Li} which we reproduce below.

\begin{lem}\label{lem:RLL} Let $H$ be a subgroup of a finite group $G$ with $1<H<G$. Suppose that for every $\chi\in \Irr(G\mid N)$, $(1_H)^G\chi=a(\chi)\chi,$ where $a(\chi)$ is some positive integer. Then $H\unlhd G$.
\end{lem}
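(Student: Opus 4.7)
The plan is to exploit the standard identity $(1_H)^G \chi = (\chi_H)^G$, valid for any class function $\chi$ on $G$, in order to rewrite the hypothesis as $(\chi_H)^G = a(\chi)\chi$ for every $\chi \in \Irr(G\mid N)$. For such a $\chi$, I would decompose $\chi_H = \sum_{\theta \in \Irr(H)} m_\theta\, \theta$ with $m_\theta = [\chi_H, \theta] \geq 0$, so that $\sum_\theta m_\theta \theta^G = a(\chi)\chi$. Because each induced character $\theta^G$ has non-negative integer coefficients when expanded in $\Irr(G)$ and the $m_\theta$ are non-negative, no cancellation is possible; hence for every irreducible constituent $\theta$ of $\chi_H$, the induced character $\theta^G$ is itself a positive integer multiple of $\chi$.

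The main step is to specialize to $\theta = 1_H$. Suppose $1_H$ were a constituent of $\chi_H$ for some $\chi \in \Irr(G\mid N)$. Then $(1_H)^G$ would be a positive multiple of $\chi$. However, $(1_H)^G$ is the permutation character of $G$ acting on the coset space $G/H$, and $[(1_H)^G, 1_G] = [1_H, 1_H] = 1$ by Frobenius reciprocity, so $1_G$ is always a constituent of $(1_H)^G$. This would force $\chi = 1_G$, contradicting $\chi \in \Irr(G\mid N)$ since $1_G \in \Irr(G/N)$. Hence $[\chi_H, 1_H] = 0$ for every $\chi \in \Irr(G\mid N)$, and by Frobenius reciprocity, no such $\chi$ appears as a constituent of $(1_H)^G$.

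Consequently every irreducible constituent of $(1_H)^G$ lies in $\Irr(G/N)$, so $(1_H)^G$ is inflated from a character of $G/N$; equivalently, $N$ is contained in the kernel of the permutation character $(1_H)^G$. That kernel equals the normal core $\bigcap_{g \in G} H^g$ of $H$ in $G$. Since $H \leq N$ (as $N$ is the normal closure of $H$ in $G$), this yields
\[
H \leq N \leq \bigcap_{g \in G} H^g \leq H,
\]
forcing equality throughout, and in particular $H = \bigcap_{g \in G} H^g \unlhd G$, as required. The only nonroutine ingredient is the observation that the permutation character $(1_H)^G$ must contain $1_G$, which rules out $1_H$ as a constituent of any $\chi_H$ with $\chi \in \Irr(G\mid N)$; once this is in hand, the remainder is a direct application of Frobenius reciprocity and the identification of the kernel of a permutation character with the normal core.
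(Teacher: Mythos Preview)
Your argument is correct. The paper itself does not prove this lemma; it merely quotes it as Theorem~3.1 of \cite{Ren-Lu-Li}. Your route is self-contained and clean: from $(\chi_H)^G=a(\chi)\chi$ and the positivity of all coefficients you deduce that every irreducible constituent $\theta$ of $\chi_H$ satisfies $\theta^G=c_\theta\chi$; since $(1_H)^G$ always contains $1_G$, the constituent $\theta=1_H$ is thereby excluded for every $\chi\in\Irr(G\mid N)$. Frobenius reciprocity then places all constituents of $(1_H)^G$ inside $\Irr(G/N)$, so $N\le\ker\,(1_H)^G=\bigcap_{g\in G}H^g\le H\le N$, giving $H=N\unlhd G$. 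There is nothing to correct; if anything, this short argument could replace the bare citation.
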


We will need the following consequence of Lemma \ref{lem:RLL}. 

\begin{cor}\label{cor:RLL}
Let $H$ be a  subgroup of a finite group $G$. Suppose that the pair $(G,H)$ satisfies condition $\CI$. If $[\chi_H,1_H]=0$ for every $\chi\in\Irr(G\mid H)$, then $H\unlhd G.$
\end{cor}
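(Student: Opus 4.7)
The plan is to apply Lemma~\ref{lem:RLL} directly, using the Ren-Lu-Li characterization of condition $\CI$ recalled just before the corollary. Since $(G,H)$ satisfies $\CI$, that characterization (\cite[Theorem 2.1]{Ren-Lu-Li}) provides, for every $\chi\in\Irr(G\mid N)$ (where $N$ is the normal closure of $H$ in $G$), the identity
\[
(1_H)^G\bigl(\chi-[\chi_H,1_H]1_G\bigr)=a(\chi)\chi
\]
for some positive integer $a(\chi)$.

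Next, I would invoke the equality $\Irr(G\mid N)=\Irr(G\mid H)$ noted in the excerpt (which follows because $N=\langle H^g:g\in G\rangle$, so a normal subgroup of $G$ contains $H$ if and only if it contains $N$). Thus the hypothesis $[\chi_H,1_H]=0$ for every $\chi\in\Irr(G\mid H)$ applies to every $\chi\in\Irr(G\mid N)$, and substituting into the displayed identity collapses it to
\[
(1_H)^G\chi=a(\chi)\chi \quad \text{for every } \chi\in\Irr(G\mid N).
\]

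This is exactly the hypothesis of Lemma~\ref{lem:RLL}, so we conclude that $H\unlhd G$. There is no real obstacle here; the corollary is a one-line consequence of combining the two cited results of Ren-Lu-Li once one observes that the correction term $[\chi_H,1_H]1_G$ in their Theorem~2.1 disappears under the given assumption.
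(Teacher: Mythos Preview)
Your proposal is correct and essentially coincides with the paper's treatment: the paper simply cites this as Corollary~3.2 in \cite{Ren-Lu-Li}, and your argument---combining the Ren--Lu--Li characterization of $\CI$ (their Theorem~2.1) with Lemma~\ref{lem:RLL} (their Theorem~3.1) via the vanishing of the $[\chi_H,1_H]$ correction term---is precisely how that corollary is obtained from those two results.
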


\begin{proof}
This is Corollary 3.2 in \cite{Ren-Lu-Li}.
\end{proof}

\section{Proof of Theorem \ref{thm1}} \label{sec:s3}

Let $G$ be a finite group and let $H$ be a nontrivial proper subgroup of $G$. Let $N$ be the normal closure of $H$ in $G$. In the first half of this section, we aim to prove the necessary condition in Theorem \ref{thm1}, namely that if the pair $(G,H)$ satisfies condition $\CI$, then it also satisfies condition $\F$. Our approach is based on analyzing the two possible cases for the restriction $\chi_H$ of characters $\chi\in\Irr(G\mid H)=\Irr(G\mid N)$: either $[\chi_H,1_H]=0$ or $[\chi_H,1_H]\neq 0$. Note that, by  Corollary \ref{cor:RLL}, if  $(G,H)$ satisfies condition $\CI$ and $[\chi_H,1_H]=0$ for every $\chi\in \Irr(G\mid N)$, then  $H\unlhd G$. Hence, assuming $H$ is not normal in $G$, there must exist some  $\chi\in \Irr(G\mid H)$ such that $[\chi_H,1_H]\neq 0$.

\begin{prop}\label{prop6}
Let $G$ be a finite group and let $H$ be a subgroup of $G$.	Suppose that the pair $(G,H)$ satisfies condition $\CI$ and that $H$ is not normal in $G$. Let $N$ be the normal closure of $H$ in $G$. Assume $\chi\in \Irr(G\mid N)$ such that $[\chi_H,1_H] \neq 0$. Then $\chi(xh) = \chi(x)$ for all $x\in N\setminus H$, $h\in H$.
\end{prop}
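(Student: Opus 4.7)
The plan is to combine Clifford theory for the normal subgroup $N$ with the character identity supplied by condition $\CI$, reducing the required equality to an orthogonality relation among linear characters of $N/H$. First, I would decompose $\chi_N = e\sum_{i=1}^{k}\theta_i$ via Clifford's theorem, with $\{\theta_i\}$ the $G$-orbit of some $\theta\in\Irr(N)$. Because $[\chi_H,1_H]\neq 0$, some $\theta_i|_H$ contains $1_H$; since $H \trianglelefteq N$ and $N/H$ is abelian by Lemma \ref{Hnormal}, $\theta_i$ must be a constituent of $(1_H)^N = \sum_{\lambda\in \widehat{N/H}}\lambda$, so $\theta_i$ and all its $G$-conjugates are linear. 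Writing $\mu_j := \theta_j|_H$ and setting $I = \{j : \mu_j = 1_H\}$, $J = \{j : \mu_j \neq 1_H\}$, one has $\chi_H = s\cdot 1_H + e\sum_{j\in J}\mu_j$ with $s = e|I|$.

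I would then invoke the Ren-Lu-Li identity $(\chi_H)^G - s(1_H)^G = a\chi$, which holds under $\CI$. Combined with the Frobenius-reciprocity identity $(\chi_H)^G = \chi\cdot(1_H)^G = s(1_H)^G + e\sum_{j\in J}\mu_j^G$, this yields $e\sum_{j\in J}\mu_j^G = a\chi$. Since $\CI$ makes each $\mu_j^G$ a positive multiple of some $\xi_j\in\Irr(G)$, irreducibility of $\chi$ forces $\xi_j = \chi$, so $\mu_j^G = a_j\chi$ for every $j\in J$.

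The heart of the argument, and the step I expect to demand the most care, is to upgrade this homogeneity into the statement that the full fiber $\theta_j\widehat{N/H}$ of $\mu_j$ (under the restriction map $\Irr(N)\to\Irr(H)$) is contained in the orbit $\{\theta_1,\dots,\theta_k\}$. By transitivity of induction and Gallagher's theorem (applicable because $\mu_j$ extends to the linear character $\theta_j$ of $N$ and is $N$-invariant), $\mu_j^G = \sum_{\lambda\in \widehat{N/H}}(\theta_j\lambda)^G$. Matching this expansion against $a_j\chi$ via Frobenius reciprocity forces $[\theta_j\lambda,\psi_N]_N = 0$ for every $\psi\in\Irr(G)\setminus\{\chi\}$ and every $\lambda\in\widehat{N/H}$. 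Since each $\theta_j\lambda$ must appear in the restriction to $N$ of some irreducible constituent of $(\theta_j\lambda)^G$, that constituent must be $\chi$, so $\theta_j\lambda$ appears in $\chi_N$ and hence lies in $\{\theta_i\}$.

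Granted this, regrouping $\sum_{j\in J}\theta_j(x)$ according to the value of $\mu_j$ yields
\[
\sum_{j\in J}\theta_j(x) \;=\; \sum_{\mu\neq 1_H}\theta_\mu(x)\sum_{\lambda\in \widehat{N/H}}\lambda(x),
\]
where $\theta_\mu$ is any fixed linear extension of $\mu$ to $N$; the inner sum vanishes whenever $xH\neq H$ by orthogonality of characters of $N/H$, so $\sum_{j\in J}\theta_j(x)=0$ for every $x\in N\setminus H$. Since $xh\in N\setminus H$ whenever $x\in N\setminus H$ and $h\in H$, the same vanishing applies at $xh$, and
\[
\chi(xh)-\chi(x) \;=\; e\sum_{j\in J}\theta_j(x)\bigl(\theta_j(h)-1\bigr) \;=\; e\Bigl(\sum_{j\in J}\theta_j(xh) - \sum_{j\in J}\theta_j(x)\Bigr)
\]
(using linearity of the $\theta_j$ and that $\theta_j(h)=1$ for $j\in I$) is therefore zero, completing the proof.
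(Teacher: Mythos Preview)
Your proof is correct and follows essentially the same route as the paper: both show that all constituents $\theta_i$ of $\chi_N$ are linear, use condition $\CI$ to conclude that $\Irr(N\mid\mu_j)$ lies entirely inside the Clifford orbit for each nontrivial constituent $\mu_j$ of $\chi_H$, and then observe that the contribution from $j\in J$ vanishes on $N\setminus H$ (the paper phrases this as $\mu_j^N(x)=0$ via the induced-character formula, you as orthogonality of $\widehat{N/H}$---these are the same computation). Your detour through the Ren--Lu--Li identity is unnecessary, though: since $\mu_j$ is a nontrivial constituent of $\chi_H$, Frobenius reciprocity and condition $\CI$ give $\mu_j^G=a_j\chi$ directly, which is how the paper proceeds.
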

\begin{proof}
	First, we prove that all irreducible constituents of $\chi_N$ are linear characters.  
	By Clifford's Theorem \cite[Theorem 6.2]{Isaacsbook}, we can write
	\begin{align}\label{5}
		\chi_N = f(\theta_1 +  \ldots + \theta_s  + \theta_{s+1} + \ldots + \theta_t)
	\end{align}
	where $\theta_1,\dots,\theta_t$ are distinct irreducible characters of $N$ that are conjugate under the action of $G$.  Since $[\chi_H,1_H]\neq 0$, we may assume without loss of generality that the trivial character $1_H$ lies under $\theta_1,\ldots, \theta_s$ for some integer $s$ with $1\leq s\leq t$.
	By Lemma \ref{Hnormal}, we know that $H\unlhd N$. Applying Clifford's Theorem again to each $\theta_i$ with $1\leq i\leq s$, the restrictions of all these characters $\theta_1,\ldots,\theta_s$ to $H$ are multiple of $1_H$. Consequently, for $1\leq i\leq s$, $\theta_i$ is an irreducible character of the quotient group $N/H$, which is abelian by Lemma \ref{Hnormal}. Hence  these $\theta_i$ are all linear characters of $N$. Finally, since $\theta_1,\dots,\theta_t$ are conjugate in $G$, they all have the same degree. Therefore, all $\theta_i$, for $1\leq i\leq t$,  are linear characters, as required. 
	
	Next, observe that the restrictions of $\theta_1,\ldots,\theta_s$ to $H$  are all equal to $1_H$. 
	Since $\chi\notin \Irr(G/N)$, where $N$ is the smallest normal subgroup of $G$ containing $H$, it follows that $H\nsubseteq \ker(\chi)$. Thus $\chi_H\neq \chi(1)1_H$, meaning that $1_H$ is not the unique irreducible constituent of $\chi_H$. Therefore, we can choose a constituent $\mu_1$ of $\chi_H$ different from $1_H$.  Assume that $\theta_{s+1},\ldots,\theta_{l_1}$ 
	(for some $s+1\leq l_1\leq t$) are all the irreducible constituents of $\chi_N$ lying over $\mu_1$. 
	We will show that $\left\{\theta_{s+1},\ldots,\theta_{l_1}\right\} = \Irr(N\mid \mu_1)$ and moreover, 
	\begin{align*}
		\mu_1^N = \theta_{s+1}+\ldots  + \theta_{l_1}.
	\end{align*}
	Take an arbitrary   $\theta\in \Irr(N\mid \mu_1)$.  Since $(G,H)$ satisfies condition $\CI$ and $\mu_1\neq 1_H$,  $$\mu_1^G  = (\mu_1^N)^G = e\chi$$ for some positive integer $e$. This implies that $\chi$ is the unique irreducible constituent of $\theta^G$.  Consequently,  $\theta$ must be an irreducible constituent of $\chi_N$ lying over $\mu_1$,  so $\theta\in \left\{\theta_{s+1},\ldots,\theta_{l_1}\right\}$. Conversely, let $\varphi\in \left\{\theta_{s+1},\ldots,\theta_{l_1}\right\}$. Then $ \varphi \in \Irr(N)$ lies over $\mu_1$, so $\varphi\in \Irr(N\mid \mu_1)$. This establishes the equality  $\left\{\theta_{s+1},\ldots,\theta_{l_1}\right\} = \Irr(N\mid \mu_1)$. Furthermore, since $\theta_{s+1},\ldots,\theta_{l_1}$ are linear characters of $N$, $(\theta_{i})_H = \mu_1$ for each $i=s+1,\ldots,l_1$, and it implies that
	\begin{align*}
		[\mu_1^N,\theta_i] = [\mu_1,(\theta_i)_H] = 1.
	\end{align*}
	Therefore, $\mu_1^N = \theta_{s+1} + \ldots + \theta_{l_1}$ as claimed. 
	
	We now repeat the same argument for the remaining nontrivial irreducible constituents of $\chi_H$.   Suppose $\left\{1_H,\mu_1,\mu_2,\ldots,\mu_n\right\}$ is the set of all  irreducible constituents of $\chi_H$, where  $1_H\neq \mu_j\in\Irr(H)$ for $1\leq j\leq n$.	
	Take any $\mu_j$ for $j=2,\dots,n $.  By reindexing the set $\{\theta_1,\dots,\theta_t\}$, we may assume that $\theta_{l_{j-1}+1},\ldots,\theta_{l_j}$ are the irreducible constituents of $\chi_N$ lying over $\mu_j$, where $l_j\leq t$ and  $l_n=t$. Applying the same argument used previously for $\mu_1$, we obtain: $$\left\{\theta_{l_{j-1}+1},\ldots,\theta_{l_j}\right\} = \Irr(N\mid \mu_j)\text{ and }\mu_j^N = \theta_{l_{j-1}+1} + \ldots + \theta_{l_j}.$$  
	To finish the proof of the proposition, we observe that the subset $\left\{\theta_{s+1},\ldots,\theta_{t}\right\}$ of the irreducible constituents of $\chi_N$ in Equation \eqref{5} is a disjoint union
	$$\left\{\theta_{s+1},\ldots,\theta_{t}\right\} = \left\{\theta_{s+1},\ldots,\theta_{l_1}\right\}\cup \left\{\theta_{l_1+1},\ldots,\theta_{l_2}\right\}\cup\ldots\cup\left\{\theta_{l_{n-1}+1},\ldots,\theta_{l_n}\right\}$$ which is equal to
	$$ \Irr(N\mid \mu_1)\cup\Irr(N\mid \mu_2)\cup\ldots\cup\Irr(N\mid \mu_n).$$
	
	Furthermore, $\chi_N$ in Equation \eqref{5} can be expressed as follows
	\begin{align}\label{8}
		\chi_N &=f (\theta_1+\ldots+\theta_s+\theta_{s+1}+\ldots+\theta_{l_1}+\theta_{l_1+1}+\ldots+\theta_{l_2}+\ldots+\theta_{l_{n-1}+1}+\ldots+\theta_{l_n}) \nonumber\\
		&= f(\theta_1+\ldots+\theta_s + \mu_1^N + \mu_2^N+\ldots +\mu_n^N).
	\end{align}

	Finally, as $\theta_1,\ldots,\theta_s$ are characters of the quotient $N/H$, for any $x \in N\setminus H$ and  $h\in H$, we have $\theta_i(xh) = \theta_i(x)$ for all $i = 1,\ldots,s$. Furthermore, as $H\unlhd N$, $x^N$ and $(xh)^N$ intersect $H$ trivially, and thus $\mu_j^N(x) = 0 = \mu_j^N(xh)$ for all $j=1,\ldots,n$. Substituting into  Equation \eqref{8}, we obtain
	\begin{align*}
		\chi(xh) = f(\theta_1(xh) + \ldots+\theta_s(xh)) = f(\theta_1(x) + \ldots + \theta_s(x)) = \chi(x).
	\end{align*} This shows that $\chi$ is constant on the nontrivial left cosets of $H$ in $N$, completing the proof of the proposition.
\end{proof}

\begin{prop}\label{prop7}
Let $G$ be a finite group and let $H$ be a subgroup of $G$.	Suppose that the pair $(G,H)$ satisfies condition $\CI$ and that $H$ is not normal in $G$. Let $N$ be the normal closure of $H$ in $G$. Assume $\chi \in \Irr(G\mid N)$ such that $[\chi_H,1_H]=0$. Then $\chi(xh) = \chi(x)=0$ for all $x\in N\setminus H$ and $h\in H$.
\end{prop}
\begin{proof}
	First, by applying Clifford's Theorem with $N\unlhd G$, we obtain 
	\begin{align}\label{9}
		\chi_N = f(\theta_1+\theta_2+\ldots + \theta_t),
	\end{align}
	where $\theta_i$ for $i=1,2,\ldots,t$ are distinct irreducible characters of $N$, which are conjugate in $G$. Since $[\chi_H,1_H]=0$, we can find a nontrivial irreducible constituent $\mu_1$ of $\chi_H$.
	We may assume that $\theta_1,\ldots,\theta_{l_1}$, for some $l_1\leq t$, are precisely the irreducible constituents of $\chi_N$ lying over $\mu_1$. Analogous to the argument in Proposition \ref{prop6}, we claim $$\Irr(N\mid \mu_1) = \left\{\theta_1,\ldots,\theta_{l_1}\right\}\text{ and }
		\mu_1^N = a_1(\theta_1+\ldots+\theta_{l_1})$$
	for some positive integer $a_1$. To verify this, take a character $\varphi \in \left\{\theta_1,\ldots,\theta_{l_1}\right\}$. Then $\varphi$ is an irreducible character of $N$ lying over $\mu_1$, so $\varphi\in \Irr(N\mid \mu_1)$. Conversely, let $\theta\in \Irr(N\mid \mu_1)$. Since $(G,H)$ satisfies condition $\CI$ and $\mu_1\neq 1_H$, $\mu_1^G = (\mu_1^N)^G = e\chi$ for some positive integer $e$. Thus $\chi$ is the unique irreducible constituent of $\theta^G$, which implies that $\theta$ is an irreducible constituent of $\chi_N$. As $\theta$ lies over $\mu_1$, it must be among $ \left\{\theta_1,\ldots,\theta_{l_1}\right\}$. This shows that $\left\{\theta_1,\ldots,\theta_{l_1}\right\} = \Irr(N\mid \mu_1)$, as claimed.
	
	Next, since $H\unlhd N$ by Lemma \ref{Hnormal}, we assume that for some elements $n_1,\ldots,n_s\in N$,  the characters $\mu_1,\mu_1^{n_1},\ldots,\mu_1^{n_s}\in \Irr(H)$ are all the distinct $N$-conjugates of $\mu_1$.
	Because $\theta_i$ lies over $\mu_1$ for all $i=1,\ldots,l_1$, applying Clifford's Theorem for $H\unlhd N$, we obtain
	$$(\theta_i)_H = b_i(\mu_1+\mu_1^{n_1} + \ldots + \mu_1^{n_s}), $$
	for some positive integer $b_i$ and for all $i = 1,\ldots, l_1$. Since $\theta_1,\ldots,\theta_{l_1}$ have the same degree and since $\mu_1,\mu_1^{n_1},\ldots,\mu_1^{n_s}$ have the same degree,  it follows that
	$$	  b_i(s+1)\mu_1(1)= \theta_i(1)= \theta_1(1) = b_1 (s+1)\mu_1(1),$$  which implies that $b_1=b_2=\dots=b_{l_1}$. Denote this common value by $a_1$, we conclude that $a_1=[(\theta_i)_H,\mu_1] = [\theta_i,\mu_1^N]$ for all $i  = 1,\ldots,l_1$. Therefore, since $\Irr(N\mid\mu_1) = \left\{\theta_1,\ldots,\theta_{l_1}\right\}$,  it follows that 
	\[ \mu_1^N = a_1(\theta_1+\ldots+\theta_{l_1}) \text{ or equivalently } \theta_1 + \ldots + \theta_{l_1} = \dfrac{1}{a_1} \mu_1^N. \]
	
	We now proceed, as in the proof of  Proposition \ref{prop6}, to partition   the set $\left\{\theta_1,\theta_2,\ldots,\theta_t\right\}$ of irreducible constituents  of $\chi_N$. To that end, suppose that  $\mu_2,\ldots,\mu_n$ together with $\mu_1$ are all the irreducible constituents of $\chi_H$ that are not conjugate in $N$. We claim that  the sets $\Irr(N\mid \mu_1),\Irr(N\mid\mu_2),\ldots,\Irr(N\mid \mu_n)$ form a partition of $\left\{\theta_1,\ldots,\theta_t\right\}$. To see that these sets are pairwise disjoint, suppose otherwise. Then for some $j\neq k$, $\Irr(N\mid\mu_j)$ and $\Irr(N\mid \mu_k)$ would share a common element $\lambda\in\Irr(N)$, implying that both $\mu_j$ and $\mu_k$ appear in $\lambda_H$. However, since $H\unlhd N$, Clifford's Theorem would imply that $\mu_j$ and $\mu_k$ are conjugate in $N$, contradicting the assumption. 
	
	Now, since $\mu_j$ for $j=2,\ldots,n$ is a constituent of $\chi_H$, it must  lie under some $\theta_i\in \{\theta_1,\dots,\theta_t\}$. We may assume  that each $\mu_j$ lies under $\theta_{l_{j-1}+1},\ldots,\theta_{l_{j}}$ for $j=2,\ldots,n$ with $l_0=0$ and ${l_n} = t$. Repeating the previous argument, we conclude that for all $j=2,\ldots,n$, $$\left\{\theta_{l_{j-1}+1},\ldots,\theta_{l_j}\right\} = \Irr(N\mid \mu_j)\text{ and }
	\theta_{l_{j-1}+1} + \ldots + \theta_{l_j} = \dfrac{1}{a_j}\mu_j^N,$$
	for some positive integer $a_j$. Therefore, we obtain the desired partition $$\Irr(N\mid \mu_1)\cup \Irr(N\mid \mu_2)\cup \ldots\cup\Irr(N\mid\mu_n)=\left\{\theta_1,\ldots,\theta_t\right\}.$$ From Equation \eqref{9}, 
	 	\begin{align}\label{1}
		\chi_N &= f (\theta_1+ \ldots +\theta_{l_1} + \theta_{l_1+1}+\ldots + \theta_{l_2} + \ldots + \theta_{l_{n-1}+1} + \ldots +\theta_{l_n}) \nonumber\\
		& = f\left(\dfrac{1}{a_1}\mu_1^N + \dfrac{1}{a_2}\mu_2^N  + \ldots + \dfrac{1}{a_n}\mu_n^N\right).
	\end{align} 
	Finally, observe that for all  $x\in N\setminus H$ and  $h\in H$, since $H\unlhd N$, $x^N$ and $(xh)^N$ intersect $H$ trivially. Therefore, $\mu_j^N(x) = 0 = \mu_j^N(xh)$  for all $j=1,\ldots,n$. Substituting into Equation \eqref{1}, we conclude that $\chi(x) = 0=\chi(xh)$, which completes the proof. 
\end{proof}

We are now ready to prove the first implication of Theorem \ref{thm1}.
\begin{prop}\label{propn15}
	Let $G$ be a finite group and let $H$ be a subgroup of $G$. Suppose $H$ is not normal in $G$. If $(G,H)$ satisfies condition $\CI$, then $(G,H)$ satisfies condition $\F$.
\end{prop}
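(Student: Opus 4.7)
The plan is to verify condition $\F$ by showing that, for every $x\in G\setminus H$ and every $h\in H$, the elements $x$ and $xh$ lie in the same $G$-conjugacy class. I would split the argument according to whether $x$ lies outside the normal closure $N$ of $H$ in $G$, or inside $N\setminus H$.

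The easy case is $x\in G\setminus N$. By Lemma \ref{lem:Cam}, $(G,N)$ is a Camina pair, so every element of the coset $xN$ is $G$-conjugate to $x$. Since $H\subseteq N$, we have $xh\in xN$ for every $h\in H$, which immediately gives $xh\in x^G$.

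The substantive case is $x\in N\setminus H$. Here I would invoke the standard fact that two elements of $G$ are conjugate in $G$ if and only if every irreducible character of $G$ takes the same value on them, and decompose $\Irr(G)=\Irr(G/N)\sqcup\Irr(G\mid N)$. For $\chi\in\Irr(G/N)$ we have $N\subseteq\ker\chi$, and since both $x$ and $xh$ lie in $N$, both values equal $\chi(1)$; in particular $\chi(x)=\chi(xh)$. For $\chi\in\Irr(G\mid N)$, I would invoke Proposition \ref{prop6} if $[\chi_H,1_H]\neq 0$, and Proposition \ref{prop7} if $[\chi_H,1_H]=0$; either way $\chi(xh)=\chi(x)$. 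Combining the two sub-cases, $x$ and $xh$ are not separated by any irreducible character of $G$, so they are $G$-conjugate.

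Because the heavy lifting is already contained in Propositions \ref{prop6} and \ref{prop7}, no substantial obstacle should remain at this stage: the present proposition is essentially an assembly statement that glues the Camina-pair conclusion of Lemma \ref{lem:Cam} (handling $x\notin N$) to the coset-constancy of character values on $N\setminus H$ (handling $x\in N\setminus H$), and then appeals to the character-theoretic criterion for conjugacy to conclude.
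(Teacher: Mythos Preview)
Your proposal is correct and follows essentially the same approach as the paper: split into the cases $x\in G\setminus N$ (handled via the Camina pair $(G,N)$ from Lemma~\ref{lem:Cam}) and $x\in N\setminus H$ (handled by showing $\chi(x)=\chi(xh)$ for all $\chi\in\Irr(G)$, using Propositions~\ref{prop6} and~\ref{prop7} for $\chi\in\Irr(G\mid N)$ and the trivial observation for $\chi\in\Irr(G/N)$). The paper's proof is identical in structure and content.
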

\begin{proof}
	Let $N$ denote the normal closure of $H$ in $G$. By Lemma \ref{lem:Cam}, $(G,N)$ is a Camina pair. That is, for every $x\in G\setminus N$, $x$ is conjugate to $xn$ for all $n\in N$. In particular, for all $x\in G\setminus N$ and $h\in H\subseteq N$, $x$ is conjugate to $xh$, as required for condition $\F$. It remains to consider  the case when $x\in N\setminus H$. We claim that  for all $x\in N\setminus H$, $x$ is conjugate  to $xh$ for all $h\in H$. This is equivalent to the fact that, for all $x\in N\setminus H$, $\chi(x) = \chi(xh)$ for all $h\in H$ and $\chi\in \Irr(G)$. Indeed, for any $\chi\in \Irr(G/N)$, we have $\chi(x) = \chi(xh) = \chi(1)$ for all $x \in N, h\in H$ as $N$ is contained in the kernel of $\chi$.  For  $\chi \in \Irr(G\mid N)$, the equality $\chi(x) = \chi(xh)$ for all $x\in N\setminus H$, $h\in H$ follows from  Propositions \ref{prop6} and \ref{prop7}. Thus, all irreducible characters of $G$ are constant on the cosets $xH$ for $x\in N\setminus H$, so $xH\subseteq x^G$ for all $x\in N\setminus H.$ Hence $(G,H)$ satisfies condition $\F$, completing the proof.
\end{proof}

Next, we assume $(G,H)$ satisfies condition $\F$ and prove the converse implication in Theorem \ref{thm1}.

\begin{prop}\label{propn17}
	Let $G$ be a finite group with a nontrivial proper subgroup $H$. Suppose that $H$ is not normal in $G$. If $(G,H)$ satisfies condition $\F$, then $(G,H)$ satisfies condition $\CI$.
\end{prop}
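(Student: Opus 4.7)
Fix a nontrivial $\mu \in \Irr(H)$; I aim to show $\mu^G = e\chi$ for some $\chi \in \Irr(G)$ and $e \in \mathbb{Z}_{>0}$. My approach is to induce in two stages, $H \le N \le G$, and exploit the Camina structure of $(G,N)$ furnished by Lemma \ref{lemma8}.

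Since $(G,N)$ is a Camina pair, the Kuisch--van der Waall characterization (\cite[Lemma~2.1]{Kuisch-vanderWaall}) applied to the normal pair $(G,N)$ ensures that every nontrivial $\theta \in \Irr(N)$ satisfies $\theta^G = a_\theta\, \chi_\theta$ for some $\chi_\theta \in \Irr(G)$ and $a_\theta \in \mathbb{Z}_{>0}$, with $\chi_\theta$ depending only on the $G$-orbit of $\theta$. By Lemma \ref{lemma13}, $H \unlhd N$ and $N/H$ is abelian, so $N' \le H$; hence for every $n \in N \setminus H$ and every $m \in N$, we have $m^{-1}nm = n[n,m] \in nN' \subseteq nH$, which lies outside $H$. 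This forces $\mu^N$ to vanish on $N \setminus H$. Writing $\mu^N = \sum_{\theta \in \Irr(N\mid\mu)} b_\theta\, \theta$ by Clifford's theorem for $H \unlhd N$ with $b_\theta = [\theta|_H,\mu]$, and inducing to $G$, I obtain
\[
\mu^G = (\mu^N)^G = \sum_{\theta \in \Irr(N\mid\mu)} b_\theta\, a_\theta\, \chi_\theta.
\]
Consequently, $\mu^G$ is homogeneous if and only if $\Irr(N\mid\mu)$ is contained in a single $G$-orbit on $\Irr(N)$.

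To prove this latter assertion, I will argue by contradiction. Suppose $\chi_1, \chi_2 \in \Irr(G\mid N)$ are distinct and both have $\mu$ as a common constituent of their restriction to $H$. The Camina property of $(G,N)$ gives that $\chi_1,\chi_2$ vanish on $G \setminus N$, while condition $\F$ forces each $\chi_i$ to be constant on every coset $xH$ with $x \in N \setminus H$. Expanding $0 = [\chi_1,\chi_2]_G$ and partitioning the sum over $G$ into contributions from $H$ and from the nontrivial cosets of $H$ inside $N$ produces the identity
\[
[\chi_1|_H,\chi_2|_H]_H \;=\; -\!\!\!\sum_{\substack{xH \ne H\\ xH \subseteq N}}\chi_1(x)\,\overline{\chi_2(x)}.
\]
The left-hand side is a positive integer since $\mu$ is common to $\chi_1|_H$ and $\chi_2|_H$. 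Plugging in the Clifford decompositions $\chi_i|_N = f_i\sum_{\theta \in \mathcal{O}_i}\theta$ (where $\mathcal{O}_1,\mathcal{O}_2$ are the disjoint $G$-orbits on $\Irr(N)$ indexing $\chi_1,\chi_2$), and applying Clifford's theorem to the normal inclusion $H \unlhd N$ inside each $\theta \in \mathcal{O}_i$, the plan is to rewrite the right-hand side in a form whose non-negativity becomes manifest and contradicts positivity of the left.

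The principal obstacle will be executing this final computation cleanly. It requires careful bookkeeping of which characters in each $\mathcal{O}_i$ come from $\Irr(N/H)$ versus those lying above a specific $N$-orbit of $\Irr(H)$, leveraging the fact that $N/H$ is abelian so that the constituents of $\chi_i|_N$ above $\mu$ are parametrized by $\Irr(T/H)$ for $T = I_N(\mu)$. Once this uniqueness of the $G$-orbit above $\mu$ is secured, $\mu^G = \bigl(\sum_\theta b_\theta a_\theta\bigr)\chi$ is homogeneous, so $(G,H)$ satisfies condition $\CI$, as desired.
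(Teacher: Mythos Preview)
Your two-stage reduction is exactly right and matches the paper's strategy: $(G,N)$ is a Camina pair by Lemma~\ref{lemma8}, so each nontrivial $\theta \in \Irr(N)$ has $\theta^G$ homogeneous, and what remains is precisely to show that $\Irr(N\mid\mu)$ lies in a single $G$-orbit. The identity you derive by expanding $0 = [\chi_1,\chi_2]_G$ is also correct.

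The gap is in the final step. Your displayed identity is nothing more than a rearrangement of $[\chi_1|_N,\chi_2|_N]_N = 0$: once you plug in the Clifford decompositions $\chi_i|_N = f_i\sum_{\theta \in \mathcal{O}_i}\theta$ and use $[\theta_1,\theta_2]_N = 0$ for $\theta_1 \in \mathcal{O}_1$, $\theta_2 \in \mathcal{O}_2$, the right-hand side unwinds to $f_1 f_2\sum_{\theta_1,\theta_2}[\theta_1|_H,\theta_2|_H]_H$, which simply equals the left-hand side again. No contradiction appears, because no constraint beyond orthogonality on $N$ has been introduced; condition $\F$ was used only to factor $|H|$ out of the coset sum, not to force any cancellation. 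The ``careful bookkeeping'' you anticipate cannot manufacture a sign obstruction from a tautology, and your phrasing (``non-negativity \dots\ contradicts positivity'') already signals the difficulty.

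The missing idea, and the paper's key step, is to pair $\chi_1$ not with $\chi_2$ but with a \emph{single} $\theta \in \Irr(N\mid\mu)$. Since $\mu \neq 1_H$ and $H \unlhd N$, Clifford's theorem gives $[\theta|_H, 1_H] = 0$, whence $\sum_{h \in H}\theta(xh) = 0$ for every $x \in N$ (equivalently, $\rho(\widehat{H}) = 0$ for a representation $\rho$ affording $\theta$). Combining this with condition $\F$, which makes $\chi_1$ constant on each coset $xH \subseteq N \setminus H$ (and writing these cosets as $h_j^{g_j}H$ via Lemma~\ref{lemma11}), the entire contribution from $N \setminus H$ to $|N|\,[\chi_1|_N,\theta]$ vanishes, yielding
\[
|N|\,[\chi_1|_N,\theta] \;=\; |H|\,[\chi_1|_H,\theta|_H] \;>\; 0.
\]
Thus every $\theta \in \Irr(N\mid\mu)$ lies under $\chi_1$, which forces $\Irr(N\mid\mu) \subseteq \mathcal{O}_1$ and gives the result directly rather than by contradiction. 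The point is that the asymmetry---$\chi_1$ a class function on $G$, $\theta$ a character of $N$ with trivial $H$-average---is what makes the coset sums collapse; using two $G$-characters $\chi_1,\chi_2$ loses this.
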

\begin{proof}
	Let $\lambda$ be a  nontrivial irreducible character of $H$. 
	Let $N$ be the normal closure of $H$ in $G$. We first claim that every constituent of  $\lambda^N$ is nontrivial. Suppose, for contradiction, that  $1_N$ is a constituent of $\lambda^N$.  Then:   $$0\neq [\lambda^N,1_N] = [\lambda,1_H]=0,$$ a contradiction. Thus, all constituents of $\lambda^N$ are nontrivial. 
	
	Let $\theta$ be a constituent of $\lambda^N$ and let $\rho$ be an irreducible representation of $N$ affording $\theta$.  By Lemma \ref{lemma13}, $H\unlhd N$ and applying Clifford's Theorem, we have 
\[		\theta_H = a(\lambda_1 + \lambda_2+\ldots + \lambda_t),\]
	where $\lambda_1 = \lambda$ and $\lambda_1,\dots,\lambda_t$ are all the distinct conjugates of $\lambda$ in $N$. Since $\lambda$ is nontrivial,  all the $\lambda_i$ are also nontrivial.
	
	 Now, let $\widehat{H}=\sum_{h\in H}h \in \mathbb{C}[N]$ be the sum of all elements in $H$. Since $H\unlhd N$, $H$ is a union of some conjugacy classes of $N$, so $\widehat{H}$ lies in the center of $\CC[N]$. Hence, by the Schur's Lemma (\cite[Lemma 2.25]{Isaacsbook}), $\rho(\widehat{H})=\alpha I$  for some $\alpha \in\CC$, where $I$ denotes the identity matrix of size $\theta(1)$. We  compute: 
	\begin{align*}
		\mathrm{trace}(\rho(\widehat{H})) &=  \sum_{h\in H}\mathrm{trace}(\rho(h)) = \sum_{h\in H}\theta_H(h) = |H|[\theta_H,1_H] \\
		&=a|H|[\lambda_1+\lambda_2+\ldots+\lambda_t,1_H] = 0,
	\end{align*} since each $\lambda_i$ is nontrivial. Hence $\alpha=0$ so $\rho(\widehat{H}) = 0$.   
	
	By Lemma \ref{lemma8}, $(G,N)$ is a Camina pair. By \cite[Lemma 4.1 (7)]{Lewis}, every nontrivial irreducible character of $N$ induces homogeneously to $G$.  
	Therefore, every $\theta\in \Irr(N\mid \lambda)$ induces homogeneously to $G$. 
	We will show that all such $\theta$ induce to the same  character $\chi$ for some $\chi\in \Irr(G)$, so that $\lambda^G=e\chi$  for some integer $e>0$.
	
	By Lemma \ref{lemma11}, we have $N = \cup_{g\in G}H^g$. Hence, we can write $N=H\cup h_1^{g_1}H\cup \ldots \cup h_n^{g_n}H$ for some $h_1,\ldots,h_n\in H$ and $g_1,\ldots,g_n\in G$ with $h_i^{g_i}\not\in H$.
	Let $\chi\in \Irr(G)$ be an irreducible constituent of $\lambda^G$. We will show that $\chi$ is the unique constituent of  $\theta^G$ for all  $\theta\in \Irr(N\mid \lambda)$. To do that, we will show $[\chi,\theta^G] $ is nonzero for all $\theta\in \Irr(N\mid\lambda)$. Note that $[\chi,\theta^G] = [\chi_N,\theta]$. We compute
	
	$$\begin{array}{lll}
		|N|[\chi_N,\theta] &=& \sum_{n\in N}\chi_N(n)\overline{\theta(n)}\\
		& =&  \sum_{h\in H}\chi_H(h)\overline{\theta_H(h)} +\sum_{j=1}^n \sum_{h\in H}\chi(h_j^{g_j}h)\overline{\theta(h_j^{g_j}h)}. \\
		\end{array} $$
	From condition $\F$, each  $h_j^{g_j}\notin H$ is conjugate to $h_j^{g_j}h$ for all $h\in H$, so $$\chi(h_j^{g_j}h) = \chi(h_j^{g_j}) = \chi(h_j).$$ Thus
\begin{equation}\label{eqn3}
|N|[\chi_N,\theta]= |H|[\chi_H,\theta_H]  +\sum_{j=1}^n \chi(h_j)\sum_{h\in H} \overline{\theta(h_j^{g_j}h)} .
\end{equation}
Finally, since $\rho(\widehat{H})=0$, we obtain
\[\rho(h_j^{g_j}\widehat{H}) = \rho(h_j^{g_j})\rho(\widehat{H}) = \rho(h_j^{g_j})\cdot 0 = 0.\]
This implies that for each $j=1,\ldots,n$, $$0=\mathrm{trace}\left(\rho(h_j^{g_j}\widehat{H})\right) =\sum_{h\in H}\mathrm{trace}\left(\rho(h_j^{g_j}h)\right) = \sum_{h\in H}\theta(h_j^{g_j}h).$$  Therefore, Equation \eqref{eqn3} simplifies to $$|N|[\chi_N,\theta] = |H|[\chi_H,\theta_H]\neq 0$$  because $\chi_H$ and $\theta_H$ share a common irreducible constituent $\lambda$. 
Hence $[\chi,\theta^G]\neq 0$, and since $\theta^G$ is homogeneous, $\chi$ is the only irreducible constituent of $\theta^G.$ As this applies to all $\theta\in\Irr(N\mid \lambda),$ it follows that all of them induce to the same irreducible character $\chi\in\Irr(G),$ so $\lambda^G=e\chi$ for some $e>0$, completing the proof.
\end{proof}

\begin{proof}[\textbf{Proof of Theorem \ref{thm1}}]
If $H$ is normal in $G$, then this is Lemma 2.1 in \cite{Kuisch-vanderWaall}. Assume that $H$ is not normal in $G$. Then the result follows from Propositions \ref{propn15} and \ref{propn17}. 
\end{proof}
\section{Proof of Theorem \ref{thm2}}

In this section, we examine the case where a finite group $G$ with a subgroup $H$ satisfies condition $\Fp$.   Our goal is to establish a series of results analogous to those obtained in earlier sections, where $(G,H)$ satisfies $\CI$ or $\F$.  

\begin{lem}\label{lemma18}
Let $G$ be a finite group and let $H$ be a subgroup of $G$.	Suppose that the pair $(G,H)$ satisfies condition $\Fp$ and that $H$ is not normal in $G$. If $M \unlhd G$ and $H\nsubseteq M$, then $M<H$. Moreover, the pair $(G/M, H/M)$ satisfies condition $\Fp$. 
\end{lem}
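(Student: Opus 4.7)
The plan is to model the argument on Lemma \ref{lemma6}, replacing condition $\F$ with the weaker condition $\Fp$ but still exploiting the normality of $M$. For the first assertion $M<H$, I will argue by contradiction. Assume that $M \not\subseteq H$, and choose some $x \in M \setminus H$. Since $x \notin H$, condition $\Fp$ applies, so for each $h \in H$ the product $xh$ is conjugate in $G$ to either $x$ or $x^{-1}$. Because $M$ is normal in $G$ and $x \in M$, both $x^G$ and $(x^{-1})^G$ are contained in $M$; hence $xh \in M$ for every $h \in H$, i.e.\ $xH \subseteq M$. Multiplying by $x^{-1} \in M$ gives $H \subseteq x^{-1}M = M$, contradicting the hypothesis $H \not\subseteq M$. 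Therefore $M \subseteq H$, and combined with $H \not\subseteq M$ this yields $M < H$.

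For the second assertion, the strategy is to pull back, apply the hypothesis in $G$, and push forward, using that conjugation and inversion are preserved under quotients. First I check the frame conditions $1 < H/M < G/M$: the left-hand strict inequality is exactly $M < H$, just established, and the right-hand one follows from $H < G$. Now take an arbitrary $\bar{x} \in G/M \setminus H/M$; since $M \subseteq H$, any lift $x$ of $\bar{x}$ lies in $G \setminus H$. For any $\bar{h} \in H/M$, fix a lift $h \in H$. Applying condition $\Fp$ to $x$ and $h$ produces a $g \in G$ with either $xh = x^g$ or $xh = (x^{-1})^g$; projecting this equality to $G/M$ yields $\bar{x}\bar{h} = \bar{x}^{\,\bar{g}}$ or $\bar{x}\bar{h} = (\bar{x}^{-1})^{\bar{g}}$. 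Thus $(G/M, H/M)$ satisfies condition $\Fp$.

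I do not expect any genuine obstacle: the proof is a direct adaptation of the $\F$-case in Lemma \ref{lemma6}, and the only nuance introduced by $\Fp$ is the appearance of the inverse conjugacy class, which is absorbed by the fact that $x \in M$ and $M \unlhd G$ force both $x^G$ and $(x^{-1})^G$ into $M$.
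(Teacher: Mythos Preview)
Your proof is correct and follows essentially the same approach as the paper: the paper also assumes $M\nsubseteq H$, picks $x\in M\setminus H$, and uses $xH\subseteq x^G\cup(x^{-1})^G\subseteq M$ together with normality of $M$ to derive the contradiction $H\subseteq M$. For the second assertion the paper simply declares it obvious and skips the proof, so your explicit lift-and-project argument only adds detail rather than diverging in method.
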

\begin{proof}
	Suppose $M\nsubseteq H$ and let $x\in M\setminus H$. Since $(G,H)$ satisfies condition $\Fp$, $xH\subseteq x^G\cup(x^{-1})^G \subseteq M$, where the last containment follows from the normality of $M$ in $G$. Hence, $H\subseteq x^{-1}M = M$, a contradiction. Thus $M\subseteq H$. Since $H\nsubseteq M$, we have $H\neq M$. Hence $M<H$.
	The second claim is obvious, we skip the proof.
\end{proof}

\begin{cor}\label{corol19}
Let $G$ be a finite group and let $H$ be a subgroup of $G$.	Suppose that the pair $(G,H)$ satisfies condition $\Fp$ and that $H$ is not normal in $G$. Then $\z(G) < H<G'$.
\end{cor}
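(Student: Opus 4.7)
The plan is to follow the template of Lemma \ref{corol7} (which established $\z(G) \leq H \leq G'$ under the stronger condition $\F$) and then upgrade both inclusions to strict ones by exploiting the non-normality of $H$ together with Lemma \ref{lemma18}.

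For the left inclusion, I would suppose that some $x \in \z(G) \setminus H$ exists. Since $x$ is central, $x^G = \{x\}$ and $(x^{-1})^G = \{x^{-1}\}$, so $\Fp$ forces $xH \subseteq \{x, x^{-1}\}$. Because $H$ is nontrivial, this gives $|H|=2$ with the single nontrivial element equal to $x^{-2}$, which is central; but then $H \leq \z(G)$ is normal, contradicting the standing hypothesis. Strictness $\z(G) < H$ is then immediate, since $\z(G) = H$ would again make $H$ normal in $G$.

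The heart of the argument is $H \leq G'$. I would argue by contradiction: if $H \not\leq G'$, then $G'$ is a normal subgroup of $G$ not containing $H$, so Lemma \ref{lemma18} gives $G' < H$ and the quotient pair $(\bar G, \bar H) = (G/G', H/G')$ again satisfies $\Fp$, with $\bar G$ abelian and $\bar H$ nontrivial. The case $\bar H = \bar G$ would force $G = HG' = H$, so $\bar H < \bar G$ properly. For any $\bar x \in \bar G \setminus \bar H$, abelianness combined with $\Fp$ gives $\bar x \bar h \in \{\bar x, \bar x^{-1}\}$ for every $\bar h \in \bar H$, whence $\bar H \subseteq \{1, \bar x^{-2}\}$ and therefore $|\bar H| = 2$, say $\bar H = \{1, \bar h_0\}$ with $\bar x^2 = \bar h_0^{-1}$ for every $\bar x \in \bar G \setminus \bar H$.

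The concluding case split is on the index $[\bar G : \bar H]$. If this index is at least $3$, I pick $\bar x_1, \bar x_2 \in \bar G \setminus \bar H$ in distinct cosets of $\bar H$, so $\bar x_1 \bar x_2^{-1} \in \bar G \setminus \bar H$; then in the abelian group, $(\bar x_1 \bar x_2^{-1})^2 = \bar h_0^{-1}\bar h_0 = 1$, while the coset constraint forces this same square to equal $\bar h_0^{-1}$, yielding $\bar h_0 = 1$, a contradiction. If the index equals $2$, then $[G:H] = 2$ as well (because $G' \leq H$), so $H$ is normal in $G$, again a contradiction. This establishes $H \leq G'$, and strictness follows because $G'$ is normal while $H$ is not. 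The trickiest point is precisely the index-$2$ subcase, where $\bar G$ is forced to be cyclic of order $4$ rather than ruled out by the abelian computation; it is killed not by an internal calculation in $G$ but by the observation that $H$ would then have index $2$ and hence be normal.
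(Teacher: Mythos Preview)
Your proof is correct, but you work far harder than necessary on the inclusion $H\leq G'$. The paper's argument is a one-liner: once Lemma~\ref{lemma18} (applied with $M=G'$) gives $G'<H$, you are done, because any subgroup containing the derived subgroup is automatically normal (its image in the abelian group $G/G'$ is a normal subgroup). This is the contradiction. Your extended analysis of the abelian quotient $\bar G=G/G'$, deducing $|\bar H|=2$ and then splitting on the index $[\bar G:\bar H]$, eventually rediscovers this very fact in the index-$2$ subcase, but it was available immediately and renders the entire computation with $\bar H\subseteq\{1,\bar x^{-2}\}$ superfluous. Likewise, for $\z(G)<H$ the paper simply applies Lemma~\ref{lemma18} with $M=\z(G)$ (either $H\subseteq\z(G)$, forcing $H$ normal, or $\z(G)<H$); your direct computation is valid but the lemma handles both containments uniformly in two lines.
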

\begin{proof}
	First, suppose that $H\nsubseteq \z(G)$. Then by Lemma \ref{lemma18}, we obtain $\z(G)< H$, and the first containment follows. So we may assume that $H\leq  \z(G)$. But then $H$ is a normal subgroup of $G$. This contradicts with our assumption. Therefore, $\z(G) < H$. Similarly, if $H\nsubseteq G'$, then by Lemma \ref{lemma18}, $G'<H$. This again implies that $H$ is a normal subgroup of $G$, a contradiction. Henceforth, we must have $H < G'$. 
\end{proof}

Let $H$ be a proper subgroup of a finite group $G$. We denote by $\Delta_H(G)$ the set $$\Delta_H(G)= G\setminus \cup_{g\in G}H^g.$$ Elements in $\Delta_H(G)$ are called $H$-derangements of $G$. An easy counting argument shows that $\Delta_H(G)$ is non-empty.

\begin{lem}\label{lemma20}
Let $G$ be a finite group and let $H$ be a subgroup of $G$.	Suppose that the pair $(G,H)$ satisfies condition $\Fp$ and that $H$ is not normal in $G$. Let $x\in \Delta_H(G)$ and $K=x^G$. Let $N$ be the normal closure of $H$ in $G$. Then $KN \subseteq K\cup K^{-1}$. In particular, the pair $(G,N)$ satisfies condition $\Fp$.
\end{lem}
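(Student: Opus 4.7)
The plan is to prove the inclusion $KN\subseteq K\cup K^{-1}$ directly, and then read off condition $\Fp$ for $(G,N)$ as a formal consequence. Two invariance facts will be used throughout. First, since $K$ and $K^{-1}$ are both unions of $G$-conjugacy classes, $K\cup K^{-1}$ is closed under $G$-conjugation. Second, $\Delta_H(G)$ is $G$-invariant and closed under inversion (because $y\in H^s$ iff $y^{-1}\in H^s$), so the assumption $x\in\Delta_H(G)$ gives $K\cup K^{-1}\subseteq\Delta_H(G)$; in particular no element of $K\cup K^{-1}$ lies in $H$.

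The heart of the argument is the following claim: for any $y\in K\cup K^{-1}$, any $h\in H$, and any $s\in G$, one has $yh^s\in K\cup K^{-1}$. To prove it, conjugate by $s^{-1}$ to obtain $(yh^s)^{s^{-1}}=y^{s^{-1}}h$. By the invariance facts, $y^{s^{-1}}\in K\cup K^{-1}\subseteq\Delta_H(G)$, hence $y^{s^{-1}}\notin H$. Condition $\Fp$ then applies to $y^{s^{-1}}$ and $h$, giving that $y^{s^{-1}}h$ is conjugate to $y^{s^{-1}}$ or to $(y^{s^{-1}})^{-1}$, so $y^{s^{-1}}h\in K\cup K^{-1}$; conjugating back by $s$ yields $yh^s\in K\cup K^{-1}$. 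Since $\bigcup_{s\in G}H^s$ is closed under inversion, every $n\in N=\langle H^s:s\in G\rangle$ can be written as a product $n_1n_2\cdots n_r$ with each $n_i$ of the form $h^s$, and a straightforward induction on $r$, applying the claim repeatedly, shows that $kn\in K\cup K^{-1}$ for every $k\in K$ and every $n\in N$. This establishes $KN\subseteq K\cup K^{-1}$.

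For the final assertion, note that since $H\leq N$ one has $\bigcup_{s\in G}H^s\subseteq N$, hence $G\setminus N\subseteq\Delta_H(G)$. Thus for any $y\in G\setminus N$, applying the inclusion with $K=y^G$ yields $yN\subseteq KN\subseteq y^G\cup(y^{-1})^G$, which is exactly condition $\Fp$ for $(G,N)$. The step requiring most care is the claim itself---specifically, verifying that $y^{s^{-1}}\notin H$ when we invoke $\Fp$, which relies on combining $G$-invariance of $K$ and $K^{-1}$ with inversion-invariance of $\Delta_H(G)$. Once the claim is in hand, the induction and the formal deduction of $\Fp$ for $(G,N)$ are routine.
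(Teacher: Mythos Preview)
Your argument is correct and follows essentially the same route as the paper: both show that $K\cup K^{-1}$ is stable under right multiplication by any $h^s$ via the conjugation trick $(yh^s)^{s^{-1}}=y^{s^{-1}}h$ combined with $\Fp$, and then iterate along a word in $\bigcup_{s}H^s$ to reach all of $N$. The one point you omit is that condition $\Fp$ for $(G,N)$ includes the requirement $N<G$; the paper obtains this from the inclusion you proved by noting that $N=G$ would give $1\in KG=KN\subseteq K\cup K^{-1}$, forcing $x=1\in H$ and contradicting $x\in\Delta_H(G)$.
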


\begin{proof}
	For each $g\in G$, we first claim that $KH^g\subseteq K\cup K^{-1}$ and $K^{-1}H^g \subseteq K\cup K^{-1}$. Since $x\in \Delta_H(G)$, $x^g \notin H$, and by condition $\Fp$,  we have $$x^gH\subseteq (x^g)^{G}\cup \left((x^g)^{-1}\right)^G = K\cup K^{-1}$$ for all $g\in G$. Thus,  $KH\subseteq K\cup K^{-1}$. Hence, $K^gH^g \subseteq K^g\cup (K^{-1})^g$ for every $g\in G$. Since $K$ is a conjugacy class of $G$, $K^g = K$ and so $$KH^g \subseteq K\cup K^{-1} \text{ and }K^{-1}H^g \subseteq K\cup K^{-1}.$$ Now, let $g_1, g_2\in G$. Using the previous results, for $i=1,2$,  $$KH^{g_i} \subseteq K\cup K^{-1}\text{ and }K^{-1}H^{g_i}\subseteq K\cup K^{-1}.$$  Therefore, 
	\[	KH^{g_1}H^{g_2}\subseteq (K\cup K^{-1})H^{g_2} \subseteq KH^{g_2} \cup K^{-1}H^{g_2} \subseteq K\cup K^{-1}.\]
	Since $N = \la H^{g}:g\in G\ra$, every element $n\in N$ can be written as a product of finitely many  elements from various  conjugates  $H^g$, for  $g\in G$. Thus, by iteratively applying the containment above, we get  $Kn \subseteq K\cup K^{-1}$ and hence $KN \subseteq K\cup K^{-1}$. This containment also implies that $N<G$, because if $N=G$, then $1\in G= KG = KN \subseteq K \cup K^{-1}$, hence $x=1$ which is not an $H$-derangement of $G$. As $N$ contains $H$, $N$ is a nontrivial proper normal subgroup of $G$.
	Moreover, for all $x\in G\setminus N \subseteq \Delta_H(G)$, set $K = x^G$, as $KN \subseteq K\cup K^{-1}$, we have  $xN \subseteq K\cup K^{-1}=  x^G \cup (x^{-1})^G$, which is precisely condition $\Fp$ for the pair $(G,N)$. The proof is now complete.
\end{proof}

\begin{cor}\label{corol22}
Let $G$ be a finite group and let $H$ be a subgroup of $G$.	Suppose that the pair $(G,H)$ satisfies condition $\Fp$ and that $H$ is not normal in $G$. Let $N$ be the normal closure of $H$ in $G$. Then $N = \cup_{g\in G}H^g$. 
\end{cor}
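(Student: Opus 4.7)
The plan is to mimic the proof of Lemma \ref{lemma11}, with the minor complication that condition $\Fp$ replaces each single conjugacy class $(h^x)^G$ by the union $(h^x)^G \cup (h^{-x})^G$. Since $H$ is closed under inversion, this causes no essential difficulty. Specifically, I will show that the set $U := \cup_{g\in G} H^g$ is closed under multiplication; it clearly contains the identity and is closed under inversion (as $(h^g)^{-1} = (h^{-1})^g \in H^g$), so it is a subgroup of $G$. It is also normal (being a union of $G$-conjugates of $H$) and contains $H$, hence contains $N$. The reverse inclusion $U \subseteq N$ is trivial, giving equality.

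The heart of the argument is the closure of $U$ under multiplication. Given $g_1,g_2 \in G$ and $h_1,h_2 \in H$, set $x = g_1 g_2^{-1}$, and observe the same identity as in Lemma \ref{lemma11}:
\[
h_1^{g_1} h_2^{g_2} = \bigl(h_1^{x} h_2\bigr)^{g_2}.
\]
If $h_1^x \in H$, then $h_1^x h_2 \in H$, so the product lies in $H^{g_2} \subseteq U$. If instead $h_1^x \notin H$, condition $\Fp$ yields
\[
h_1^x h_2 \;\in\; (h_1^x)^G \cup \bigl((h_1^x)^{-1}\bigr)^G \;=\; h_1^G \cup (h_1^{-1})^G,
\]
where the equality uses that conjugate elements generate the same $G$-class. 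Since $h_1, h_1^{-1} \in H$, both classes are contained in $U$, and conjugating by $g_2$ keeps us inside $U$ (because $U$ is $G$-invariant by construction). Thus $h_1^{g_1} h_2^{g_2} \in U$ in every case.

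The only point that requires care, and which replaces the main step in the $\F$-case, is the need to absorb the extra class $(h_1^{-1})^G$ on the right-hand side; this is exactly where one uses that $H$ is closed under taking inverses. Once closure under multiplication is verified, the conclusion follows formally. In particular, this also reproves the last sentence of Lemma \ref{lemma20}, confirming that $N$ is a proper subgroup of $G$, since $\Delta_H(G) = G \setminus U = G \setminus N$ is nonempty.
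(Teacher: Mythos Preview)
Your proof is correct, but it takes a different route from the paper's. The paper argues by contradiction via Lemma~\ref{lemma20}: if some $x \in N \setminus \cup_{g\in G} H^g$ existed, then $x \in \Delta_H(G)$, so setting $K = x^G$ we would have $KN \subseteq K \cup K^{-1}$; but $x \in N$ forces $N \subseteq K \cup K^{-1}$, putting $1$ in $K \cup K^{-1}$ and hence $x = 1$, a contradiction. Your approach instead adapts Lemma~\ref{lemma11} directly, showing that $U = \cup_{g\in G} H^g$ is closed under multiplication by the same conjugation identity, with the only new wrinkle being the extra class $(h_1^{-1})^G$ arising from condition~$\Fp$, which you absorb using $h_1^{-1} \in H$. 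Your argument is more elementary and self-contained (it does not depend on Lemma~\ref{lemma20}), and it makes the parallel with the $\F$-case transparent; the paper's version, on the other hand, illustrates that once Lemma~\ref{lemma20} is in hand, the corollary drops out in two lines without revisiting the multiplicative closure computation.
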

\begin{proof}
	Assume, for contradiction,  that $\cup_{g\in G}H^g\subsetneq N$. There exists an element $x \in N\setminus \cup_{g\in G}H^g \subseteq \Delta_H(G)$. Let $K = x^G$. By Lemma \ref{lemma20}, we have $KN\subseteq K\cup K^{-1}$. Since $x\in N$,  $K\subseteq N$, and therefore, $N \subseteq K\cup K^{-1}$. But $1\in N$, so $1\in  K\cup K^{-1}$, implying that  $x = 1$, a contradiction since $x\not\in \cup_{g\in G}H^g$. Hence   $ N= \cup_{g\in G}H^g$ as wanted.
\end{proof}

\begin{lem}\label{lemma23}
Let $G$ be a finite group and let $H$ be a subgroup of $G$.	Suppose that the pair $(G,H)$ satisfies condition $\Fp$ and that $H$ is not normal in $G$. Let $N$ be the normal closure of $H$ in $G$. Then $N$ is solvable and has a normal $p$-complement for some prime $p$ dividing $|G|$.
\end{lem}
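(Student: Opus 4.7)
The strategy is to reduce to the Akhlaghi--Beltr\'an theorem once condition $\Fp$ has been transferred from $(G,H)$ to the normal closure. By Lemma \ref{lemma20} the pair $(G,N)$ satisfies condition $\Fp$, and the same lemma guarantees that $N$ is a nontrivial proper normal subgroup of $G$. Hence the hypotheses of the Akhlaghi--Beltr\'an theorem (as recalled in the Introduction) apply to $(G,N)$, with $N$ playing the role of their $H$. Their dichotomy splits into two cases according to whether $N$ is nilpotent, and I plan to handle each in turn, reading off solvability and a normal $p$-complement of $N$ from the structural information they provide on $G$.

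Assume first that $N$ is not nilpotent. Then the Akhlaghi--Beltr\'an theorem yields a prime $p$ such that $G/N$ is a $p$-group and $G$ is both $p$-nilpotent and solvable. Solvability of $N$ is then immediate from solvability of $G$. Let $K$ be the normal Hall $p'$-subgroup of $G$ furnished by $p$-nilpotency. Since $G/N$ is a $p$-group, every $p'$-subgroup of $G$ is contained in $N$, so $K \leq N$. Thus $K$ is a normal $p'$-subgroup of $N$, and the index $|N:K| = |NK:K|$ divides $|G:K|$, which is a power of $p$. Therefore $K$ is a normal $p$-complement of $N$, as required.

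Suppose instead that $N$ is nilpotent. Then $N$ is automatically solvable, and, since $N > 1$, we may pick any prime $p$ dividing $|N|$; the direct product of the Sylow $q$-subgroups of $N$ for primes $q \neq p$ is a characteristic, hence normal, $p$-complement of $N$. (Note that the more detailed Akhlaghi--Beltr\'an sub-cases in the nilpotent situation---Frobenius with kernel $N$, $N$ of prime power order, or $|G/N|=2$ with the dihedral-type structure---are not needed for this statement.) The main subtlety in executing this plan is purely bookkeeping: we must verify that Lemma \ref{lemma20} really reproduces the Akhlaghi--Beltr\'an hypotheses verbatim for $(G,N)$, after which the deduction of both solvability and the normal $p$-complement of $N$ is routine.
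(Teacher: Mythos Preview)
Your argument is correct. Lemma~\ref{lemma20} does supply both that $(G,N)$ satisfies $\Fp$ and that $1<N<G$, so the Akhlaghi--Beltr\'an dichotomy applies verbatim; the case analysis you give is clean and complete.

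The paper, however, takes a different and somewhat more direct route. Rather than invoking the full Akhlaghi--Beltr\'an structure theorem, it stays at the level of the coset--class containment already produced in Lemma~\ref{lemma20}: for any $x\in\Delta_H(G)=G\setminus N$ with $K=x^G$, the inclusion $KN\subseteq K\cup K^{-1}$ says precisely that $K\cup K^{-1}$ is a union of cosets of $N$. This is the hypothesis of Beltr\'an's \cite[Theorem~3.3]{Beltran}, whose part~(a) gives solvability of $N$ and whose part~(c), applied to any $p$-element $x\in G\setminus N$, gives a normal $p$-complement in $N$. So the paper avoids the nilpotent/non-nilpotent split entirely and never needs to know anything about $G/N$ or the global $p$-nilpotency of $G$.

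The trade-off: your approach imports a stronger external classification (indeed the same one the paper later uses in the proof of Theorem~\ref{thm2}), which makes the deduction short but front-loads the depth; the paper's approach uses a lighter, purpose-built result of Beltr\'an that is closer in spirit to the raw output of Lemma~\ref{lemma20}. Both are valid, and in fact your argument dovetails nicely with the proof of Theorem~\ref{thm2}, where the non-nilpotent branch of Akhlaghi--Beltr\'an is invoked anyway.
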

\begin{proof}
	Since $H$ is a proper subgroup of $G$,  $\Delta_H(G)=G\setminus \cup_{g\in G}H^g$ is non-empty. By Corollary \ref{corol22}, we have $\Delta_H(G)=G\setminus N$ and so $N<G$. Let $x\in \Delta_H(G)$ and let $K=x^G$. By Lemma \ref{lemma20}, $KN \subseteq K\cup K^{-1}$, and hence $xN\subseteq K\cup K^{-1}$.  By \cite[Theorem 3.3(a)]{Beltran}, $N$ is solvable. Since $G/N$ is nontrivial, we can find $x\in G\setminus N$ such that $x$ is a $p$-element for some prime $p$. By \cite[Theorem 3.3(c)]{Beltran}, $N$ has a normal $p$-complement. 
\end{proof}

\begin{lem}\label{lemma24}
Let $G$ be a finite $p$-group for some prime $p$ and let $H$ be a subgroup of $G$.	If the pair $(G,H)$ satisfies  condition $\Fp$, then $H\unlhd G$. 
\end{lem}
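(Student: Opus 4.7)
The plan is to closely mirror the proof of Lemma~\ref{lemma9}, replacing Lemma~\ref{corol7} with its $\Fp$-analogue Corollary~\ref{corol19}. I would argue by induction on $|G|$. The base case $|G|=p$ is vacuous, since a group of prime order admits no nontrivial proper subgroup and hence no pair satisfying condition $\Fp$.

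For the inductive step, I would proceed by contradiction. Suppose that $(G,H)$ satisfies $\Fp$ while $H$ is not normal in $G$. Since $G$ is a nontrivial $p$-group, its center $Z(G)$ is nontrivial, so applying Corollary~\ref{corol19} yields $Z(G)<H$. In particular $H\nsubseteq Z(G)$, and since $Z(G)\unlhd G$, Lemma~\ref{lemma18} with $M=Z(G)$ tells me that the quotient pair $(G/Z(G),\,H/Z(G))$ again satisfies $\Fp$. Here $H/Z(G)$ is nontrivial (because $Z(G)<H$) and proper (because $H<G$), and $G/Z(G)$ is a $p$-group of strictly smaller order than $G$. By the inductive hypothesis, $H/Z(G)\unlhd G/Z(G)$, and lifting the normality back to $G$ gives $H\unlhd G$, the desired contradiction.

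I do not anticipate a substantial obstacle here: all the real work has already been done in Corollary~\ref{corol19} (which forces $Z(G)<H$ as soon as $H$ is not normal) and in Lemma~\ref{lemma18} (which transfers condition $\Fp$ to quotients). The only point that merits attention is checking that $H/Z(G)$ remains a nontrivial proper subgroup of $G/Z(G)$ so that the inductive hypothesis is applicable, and this is immediate from the chain $Z(G)<H<G$. An equivalent, induction-free formulation would run up the upper central series $1=Z_0<Z_1<\cdots<Z_k=G$ exactly as in Lemma~\ref{lemma9}, repeatedly invoking Corollary~\ref{corol19} on each quotient $G/Z_i$ to conclude $Z_{i+1}\leq H$ until one of the $Z_i$ equals $H$ or until $Z_{k-1}\leq H$ (whence $G'\leq H$ and $H\unlhd G$).
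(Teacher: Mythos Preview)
Your proposal is correct and is essentially the same approach as the paper's, which simply says to assume $H$ is not normal and then rerun the argument of Lemma~\ref{lemma9} with Lemma~\ref{lemma18} and Corollary~\ref{corol19} in place of their $\F$-analogues. Your induction on $|G|$ is just a repackaging of the upper-central-series walk, as you yourself note.
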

\begin{proof}
	If $H\unlhd G$, then we are done. We may assume that $H$ is not normal in $G$. Then using Lemma~\ref{lemma18} and Corollary~\ref{corol19}, the result follows by applying the same argument as in the proof of Lemma~\ref{lemma9}. 
\end{proof}

\begin{proof}[\textbf{Proof of Theorem \ref{thm2}}]
	By Lemma \ref{lemma20}, the pair $(G,N)$ satisfies condition $\Fp$. Suppose by contradiction that $N$ is not nilpotent.  By \cite[Theorem A]{Akhlaghi-Beltran}, $G/N$ is a $p$-group for some prime $p$, and $G$ is a $p$-nilpotent solvable group. 
	
	We first show that $N$ is a normal $p$-complement of $G$. 
	Let $P$ be a Sylow $p$-subgroup of $G$ and let $Q$ be a normal $p$-complement of $G$, so that $G = Q \rtimes P$. As $G/N$ is a $p$-group, it follows that $Q \unlhd N \unlhd G$. By Lemma \ref{lemma18}, either $H\leq Q$ or $Q\leq H$. If the latter case holds, then $(G/Q,H/Q)$ satisfies condition $\Fp$, and since $G/Q$ is a $p$-group, Lemma \ref{lemma24} implies that  $H$ is normal in $G$, contradicting our hypothesis.  Therefore, $H \leq Q$ and since $N$ is the normal closure of $H$, we have $ N=Q$. In other words,   $N$ is a normal $p$-complement of $G$. Lemma \ref{lemma23} already ensures  that $N$ is solvable.  
	
	Next, we show that all elements in $G\setminus N$ are $p$-elements. Let $x\in G\setminus N$. Since $G/N$ is a $p$-group, $x^{p^a} \in N$ for some positive integer $a$. Write $x = x_px_{p'}$ where $x_p, x_{p'}\in \la x\ra$ with $x_p$ a $p$-element and $x_{p'}$  a $p'$-element, respectively. If $o(x_p) = p^c$ for some positive integer $c$, then $c\geq a$ and so $x^{p^c}\in N$. Thus $x^{p^c} = x_{p'}^{p^c} \in N$, hence $x_{p'}\in N$. Since $x_p\notin N$ and $(G,N)$ satisfies condition $\Fp$, we have  
	\[xN = x_px_{p'}N = x_pN\subseteq x_p^{G} \cup (x_p^{-1})^G.\]
	So, all elements in $xN$ are $p$-elements. It follows that $x$ itself is a $p$-element. Hence, all elements in $G \setminus N$ are $p$-elements. 
	
	We now prove that $G$ is a Frobenius group with Frobenius kernel $N$, so that $N$ is nilpotent by  Thompson's Theorem, yielding a contradiction.  Recall that $N$ is a non-nilpotent normal $p$-complement of $G$.  Since $G=PN$, with $N$ a normal $p$-complement and $P\cap N=1$, it suffices (see \cite[Problem 7.1]{Isaacsbook}) to show that  $\mathbf{C}_G(n) \subseteq N$ for every $1\neq n\in N.$
	Suppose, for contradiction, that there exists $x\in \mathbf{C}_G(n) \setminus N$ for some $1\neq n\in N$. Then $x$ is a $p$-element and $xn=nx$. Since $n$ is a $p'$-element, the order of $xn$ is $o(x)o(n)$, so $xn$ is not a $p$-element. But since $x\in G\setminus N$, condition $\Fp$ implies that $xn\in xN \subseteq x^G \cup (x^{-1})^G$, a set consisting of only $p$-elements. This contradiction shows that $\mathbf{C}_G(n)\subseteq N$, and thus $G$ is a Frobenius group with Frobenius kernel $N$. By Thompson's theorem \cite[Theorem 1]{Thompson}, $N$ is nilpotent, contradicting our assumption. Therefore, $N$ must be nilpotent, completing the proof.   
\end{proof}

\section{Orders of elements in cosets}\label{sec5}

Let $G$ be a finite group and let $p$ be a prime. An element $g\in G$ is called $p$-singular or $p$-regular (or a $p'$-element) if the order of $g$ is divisible by $p$ or not divisible by $p$, respectively.
For brevity, we say that the pair $(G,H)$ satisfies condition $\OO$ if $H<G$ and for every element $x\in G\setminus H$ of odd order, all the elements in the coset $xH$ have odd order. Let the pair $(G,H)$ satisfy condition $\OO$. Observe that if there exists an element  $x\in G\setminus H$ of odd order such that $x$ normalizes $H$, then $H$ is solvable by applying Theorem B (c) in \cite{GN}.  However, it is possible that $G\setminus H$ might not possess any element of odd order. We handle this case in the following lemma. Recall that $O^p(G)$ is the smallest normal subgroup of $G$ such that $G/O^p(G)$ is a finite $p$-group. Furthermore, $O^p(O^p(G))=O^p(G)$.

\begin{lem}\label{lem:p-singular}
Let $G$ be a finite group, let $H$ be a  proper subgroup of $G$ and let $p$ be a prime. Then  

\begin{enumerate}
\item[$(1)$] All elements  in $G\setminus H$ are $p$-singular if and only if  $O^p(H)\unlhd G$ and $G/O^p(H)$ is a $p$-group. 
\item[$(2)$] In particular, if  part (1) holds, then $H$ is subnormal in $G$, and every element in $G\setminus O^p(H)$ is $p$-singular. 
\end{enumerate}
\end{lem}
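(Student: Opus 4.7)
The plan is to prove part (1) by using the standard characterization that $O^p(G)$ is the subgroup generated by all $p$-regular elements of $G$ (equivalently, the smallest normal subgroup of $G$ with $p$-power index), and then derive part (2) as a quick corollary.

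For the forward direction of (1), I would argue that if every element of $G \setminus H$ is $p$-singular, then every $p$-regular element of $G$ must already lie in $H$, and hence in $O^p(H)$ (being a $p$-regular element of $H$). Since $O^p(G)$ is generated by the $p$-regular elements of $G$, this gives $O^p(G) \leq O^p(H)$. The reverse containment $O^p(H) \leq O^p(G)$ is automatic, because every $p$-regular element of $H$ is also a $p$-regular element of $G$ and hence lies in $O^p(G)$. Combining these yields $O^p(H) = O^p(G)$, which is normal in $G$ with $G/O^p(H) = G/O^p(G)$ a $p$-group by definition of $O^p(G)$. For the converse, if $O^p(H) \unlhd G$ and $G/O^p(H)$ is a $p$-group, then any $p$-regular element of $G$ maps to a $p$-regular element of a $p$-group, which must be trivial; so every $p$-regular element of $G$ lies in $O^p(H) \leq H$, and no element of $G \setminus H$ can be $p$-regular.

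For part (2), assuming the equivalent conditions of (1), the quotient $H/O^p(H)$ embeds in the $p$-group $G/O^p(H)$. Since every subgroup of a nilpotent group is subnormal, $H/O^p(H)$ is subnormal in $G/O^p(H)$, and lifting through the normal subgroup $O^p(H)$ of $G$ gives that $H$ is subnormal in $G$. The final assertion, that every element of $G \setminus O^p(H)$ is $p$-singular, is just the converse direction of (1) applied with $O^p(H)$ in place of $H$; this is legitimate because $O^p(O^p(H)) = O^p(H)$, so the required hypothesis ``$O^p(O^p(H)) \unlhd G$ with $p$-power index'' is identical to what is already assumed.

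I do not anticipate a real obstacle: the whole proof reduces to a short, two-sided inclusion establishing $O^p(H) = O^p(G)$ under the hypothesis of (1), after which both directions of (1) and both conclusions of (2) follow by unwinding the defining properties of $O^p$.
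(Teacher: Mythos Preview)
Your proposal is correct and follows essentially the same approach as the paper. Both arguments hinge on identifying $O^p(H)$ with the subgroup generated by all $p$-regular elements of $G$; you streamline this by invoking the standard characterization $O^p(G)=\langle\text{$p$-regular elements of }G\rangle$ to get $O^p(H)=O^p(G)$ directly, whereas the paper rederives this equality in-line (calling the generated subgroup $K_0$ and showing $K_0=O^p(H)$), but the underlying idea and the treatment of the converse and of part~(2) are the same.
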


\begin{proof} (1) Suppose that all elements in $G \setminus H$ are $p$-singular. First, we claim that  $|G:H|$ is a power of $p$. Suppose not, and let $q\neq p$ be a prime dividing $|G:H|$. Then a Sylow  $q$-subgroup $Q$ of $G$ is not contained in  $H$, so there exists a $q$-element outside $H$, contradicting the assumption that all elements in $G\setminus H$ are $p$-singular. Hence $|G:H|$ is a $p$-power. 
	
Now, we will show that $O^p(H) \unlhd G$ and $G/O^p(H)$ is a $p$-group.  Set $K = O^p(H)$, so $H/K$ is a $p$-group. If $K$ is a $p$-group, then $H$ is a $p$-group and hence $K=1$. In this case, it follows from the preceding paragraph that  $G$ is a  $p$-group, and the conclusion follows. Assume now that $K$ is not a $p$-group, and thus $G$ is not a $p$-group. Let $x\in G$ be an arbitrary $p$-regular element.  Since all elements in $G\setminus H$ are $p$-singular, $x \in H$. Moreover, since $H/K$ is a $p$-group, it follows that $x\in K$. Hence, all $p$-regular elements of $G$ lie in $K$. 
	
Let $K_0$ be the subgroup of $G$ generated by all $p$-regular elements of $G$. Then $K_0\unlhd G$ because the set of $p$-regular elements is closed under conjugation. We will show that $K_0 = K$, and so $K\unlhd G$.  From the previous paragraph, $K_0\subseteq K$. Now consider an arbitrary element $hK_0 \in H/K_0$, and write   $h=h_ph_{p'}$ where $h_p,h_{p'} $ are the $p$-part and $p'$-part of $h$, respectively. Then $h_{p'}\in K_0$, so $hK_0 = h_pK_0$ has  $p$-power order in $H/K_0$.  Thus all elements of $H/K_0$ are $p$-elements, and so $H/K_0$ is a $p$-group. Hence  $K\subseteq K_0$, and we conclude that $K= K_0\unlhd G$. Moreover, $|G:H|$ is a $p$-power and $H/K$ is a $p$-group, which implies that $G/K$ is a $p$-group.

Conversely,  suppose $O^p(H)\unlhd G$ and $G/O^p(H)$ is a $p$-group. Then every element $g\in G\setminus H$ maps to a nontrivial element in the $p$-group $G/O^p(H)$, and hence must be $p$-singular. Thus, $G\setminus H$ contains only $p$-singular elements.

(2)  Suppose part (1) holds. Then $O^p(H) \unlhd G$ and  $G/O^p(H)$ is a $p$-group. It follows that $H/O^p(H)$ is subnormal in $G/O^p(H)$, so $H$ is subnormal in $G$. Now, every element $g$ in $G\setminus O^p(H)$ maps to a nontrivial element in the $p$-group $G/O^p(H)$, hence $g$ must be a $p$-singular element. 
\end{proof}

\begin{lem}\label{lem:subgroup}
Let $G$ be a finite group and let $H$ be a subgroup of $G$. If the pair $(G,H)$ satisfies condition $\OO$, then the pair $(G,O^2(H))$ satisfies condition $\OO$. 
 \end{lem}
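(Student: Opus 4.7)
My plan is to set $K=O^2(H)$ and verify the two parts of condition $\OO$ for the pair $(G,K)$. Properness is immediate since $K\leq H<G$ forces $K<G$, so the real content is the coset condition: for every $x\in G\setminus K$ of odd order, every element of $xK$ must have odd order.

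The key step I would establish first is that every odd-order element of $H$ already lies in $K$. This is purely a property of $O^2(H)$: by definition $H/K$ is a $2$-group, so if $y\in H$ has odd order then $yK$ has order dividing the odd number $|y|$ and simultaneously a power of $2$, hence $yK=K$ and $y\in K$. Equivalently, $H\setminus K$ contains no element of odd order.

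With this observation in hand, the remainder is essentially bookkeeping. Take $x\in G\setminus K$ with $|x|$ odd. The previous paragraph rules out $x\in H\setminus K$, and of course $x\notin K$ by hypothesis, so $x\in G\setminus H$. Condition $\OO$ applied to the pair $(G,H)$ then says that every element of the coset $xH$ has odd order. Since $K\leq H$, we have $xK\subseteq xH$, and so every element of $xK$ has odd order, which is exactly what condition $\OO$ for $(G,K)$ requires. There is no substantive obstacle to overcome here; the lemma reduces to the single structural fact that $O^2(H)$ absorbs all odd-order elements of $H$, after which the condition for $(G,K)$ is inherited from the condition for $(G,H)$ by the inclusion $xK\subseteq xH$.
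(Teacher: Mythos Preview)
Your proof is correct and follows essentially the same approach as the paper's: both argue that an odd-order element of $G\setminus O^2(H)$ cannot lie in $H$ (since $H/O^2(H)$ is a $2$-group), and then use condition $\OO$ for $(G,H)$ together with $xO^2(H)\subseteq xH$ to conclude. You front-load the observation that odd-order elements of $H$ lie in $O^2(H)$ as a standalone fact, whereas the paper handles it as an inline case distinction, but the content is identical.
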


\begin{proof}
Assume that $(G,H)$ satisfies condition $\OO$. Let $g\in G\setminus O^2(H)$ be an element of odd order. We want to show that every element in the coset $gO^2(H)$ has odd order. If $g\notin H$, then all elements in $gH$ have odd order by condition $\OO$. Hence, all elements in $gO^2(H)$ have odd order.  If $g\in H$, then since $g\notin O^2(H)$, $gO^2(H)$ is a nontrivial odd order element in $H/O^2(H)$, which is a contradiction as $H/O^2(H)$ is a $2$-group. Therefore, such an element does not exist and thus $(G,O^2(H))$ satisfies condition $\OO$. 
\end{proof}

We make the following remark. 

\begin{rem}\label{lem:directproduct}
Let $S$ be a finite nonabelian simple group and let $N=S_1\times S_2\times\cdots\times S_k$, where each $S_i\cong S$ and $k\ge 1.$ If $K\unlhd N$ and $K<N$, then $N/K$ is not solvable (so it is not a $2$-group).
\end{rem}

The next theorem contains most of the assertions in Theorem \ref{th:odd-order-coset}.
\begin{thm}\label{th:odd-order}
Let $G$ be a finite group and let $H$ be a proper subgroup of $G$. Suppose that the pair $(G,H)$ satisfies condition $\OO$. Then either $O^2(H)\unlhd G$ or $H$ is solvable. 
\end{thm}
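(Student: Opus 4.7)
The plan is to induct on $|G|$, reducing to configurations where the Navarro--Guralnick theorem \cite[Theorem~B(c)]{GN} --- stating that if $N\unlhd G$ and $xN$ consists of odd-order elements then $N$ is solvable --- can be applied to a normal overgroup of $H$. By Lemma~\ref{lem:subgroup}, $(G,O^2(H))$ satisfies $\OO$; since $O^2(O^2(H))=O^2(H)$ and solvability of $O^2(H)$ propagates to $H$ (as $H/O^2(H)$ is a $2$-group), one may assume $H=O^2(H)$, so the conclusion becomes ``$H\unlhd G$ or $H$ is solvable''. If every element of $G\setminus H$ is $2$-singular, Lemma~\ref{lem:p-singular}(1) immediately gives $H\unlhd G$; so assume there is an odd-order element $x\in G\setminus H$. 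For contradiction, suppose $H$ is neither normal in $G$ nor solvable.

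\textbf{Inductive step.} For every proper intermediate subgroup $H\lneq L\lneq G$, condition $\OO$ is inherited by $(L,H)$, and the induction hypothesis applied to $L$ gives $H\unlhd L$ (the alternative ``$H$ solvable'' being excluded). Thus every proper overgroup of $H$ in $G$ sits in $N_G(H)$, and in particular $N_G(H)<G$. Any odd-order $y\in N_G(H)\setminus H$ would yield $H\unlhd\langle y,H\rangle\subseteq N_G(H)$ with the coset $yH$ consisting of odd-order elements, so \cite[Theorem~B(c)]{GN} would force $H$ solvable; hence $N_G(H)\setminus H$ contains no odd-order element, and Lemma~\ref{lem:p-singular}(1) applied to $(N_G(H),H)$ shows that $N_G(H)/H$ is a $2$-group. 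In particular $x\notin N_G(H)$, so $G=\langle x,H\rangle$.

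\textbf{Main obstacle.} It remains to treat the case $G=\langle x,H\rangle$ with $x$ odd-order and $x\notin N_G(H)$. The natural strategy is to form the normal closure $N=\langle H^g:g\in G\rangle$ and show that the coset $xN$ consists of odd-order elements, whereupon \cite[Theorem~B(c)]{GN} applied to $(G,N)$ forces $N$ (and hence $H\leq N$) solvable, yielding the desired contradiction. The conjugation identity $x\cdot h^g=g^{-1}(x^{g^{-1}}h)g$ shows that $x\cdot h^g$ has the same order as $x^{g^{-1}}h$, which is odd by $\OO$ provided $x^{g^{-1}}\notin H$; so a preliminary step is to choose $x\in\Delta_H(G)$, whose existence with odd order has to be secured by a counting argument. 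The genuine difficulty, which is the heart of the proof, is to pass from single conjugates $h^g$ to arbitrary products $n=h_1^{g_1}\cdots h_k^{g_k}\in N$. A reduction via the solvable radical $R(G)$ --- after checking that $(G/R(G),HR(G)/R(G))$ inherits $\OO$ whenever the subgroup remains proper, using that odd-order elements of $G/R(G)$ lift to odd-order elements of $G$ --- should further reduce to the case $R(G)=1$, where $\mathrm{Soc}(G)$ is a direct product of nonabelian simple groups and the generation $G=\langle x,H\rangle$ by an odd-order element sharply constrains the configuration; this semisimple endgame is where the main work lies.
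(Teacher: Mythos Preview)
Your easy reductions and the inductive step are correct and parallel the paper's Claims~4--6: every proper overgroup of $H$ lies in $N_G(H)$, and $N_G(H)/H$ is a $2$-group. But your proposed ``natural strategy'' is not merely difficult---it is doomed. You want to apply \cite[Theorem~B]{GN} to the coset $xN$ with $N=\langle H^G\rangle$, yet your own setup forces $N=G$: if $H<N<G$ then $N$ is a proper overgroup of $H$, so $N\leq N_G(H)$ and $N/H$ is a $2$-group; since $H=O^2(H)$ this gives $H=O^2(N)$, which is characteristic in $N\unlhd G$, contradicting $H\not\unlhd G$. (This is precisely the paper's Claim~4.) With $N=G$ the ``coset'' $xN$ is all of $G$, which certainly contains even-order elements since $H$ is nonsolvable; so there is no normal subgroup at this level to which \cite{GN} can be applied. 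Your preliminary step of securing an odd-order element in $\Delta_H(G)$ is also unwarranted: such elements need not exist, and the paper has to handle separately the case where all derangements have even order.

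The paper's route after this point is quite different from what you sketch. Having established $\langle H^G\rangle=G$, it shows the core $H_G$ is trivial, then that $G=HN$ for every minimal normal subgroup $N$, and finally that $G$ is a nonabelian simple group (Claims~5--8). The endgame (Claims~9--11) does not use \cite{GN} at all but rather works with conjugacy-class products: for a derangement $x\in\Delta_H(G)$ and $h\in H$ of opposite order parity one shows $x^Gh^G\subseteq\Delta_H(G)$ with the parity preserved, so iterating gives $x^G(h^G)^k\subseteq\Delta_H(G)$ for every $k\geq 1$; the Arad--Stavi--Herzog covering theorem \cite[Theorem~1.1]{AH} then furnishes $m$ with $(h^G)^m=G$, forcing $G\subseteq\Delta_H(G)$, a contradiction. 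Your ``semisimple endgame'' gesture does not identify this covering-theorem mechanism, which is the actual heart of the argument.
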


\begin{proof}
Suppose that the pair $(G,H)$ is a counterexample to the theorem with $|G|+|H|$ minimal. Then $(G,H)$ satisfies condition $\OO$ but $O^2(H)$ is not normal in $G$ and $H$ is nonsolvable. By Lemma \ref{lem:subgroup}, the pair $(G,O^2(H))$ also satisfies condition $\OO$.

Assume $O^2(H)<H$. Then $|G|+|O^2(H)|<|G|+|H|$, and by the minimality of the counterexample, the pair $(G,O^2(H))$ cannot be a counterexample. Hence,  either $O^2(O^2(H))\unlhd G$ or $O^2(H)$ is solvable.  But since $O^2(O^2(H))=O^2(H)$, and $H$ is solvable if and only if $O^2(H)$ is solvable, both possibilities contradict the assumption that $(G,H)$ is a counterexample. Thus, we may assume $H=O^2(H)$. By Lemma \ref{lem:p-singular}, $G\setminus H$ contains at least one  element of odd order. 

\smallskip
\textbf{Claim $1$.} 
Let $K$ be a proper subgroup of $G$ such that $K\not\leq H$. Then $O^2(H\cap K)\unlhd K$ and $K/O^2(H\cap K)$ is a $2$-group or $H\cap K$ is solvable.
\smallskip

Since $H\cap K<K$, the pair $(K,H\cap K)$  satisfies condition $\OO$. Thus $(K,O^2(H\cap K))$ satisfies condition $\OO$ by Lemma \ref{lem:subgroup}. Since $|K|+|O^2(H\cap K)|<|G|+|H|$,  $(K,O^2(H\cap K))$ cannot be a counterexample. Hence, either $O^2(H\cap K)\unlhd K$ or $O^2(H\cap K)$ is solvable. If  $O^2(H\cap K)$ is solvable, then so is $H\cap K$ and we are done. So, assume $O^2(H\cap K)$ is nonsolvable. Then $O^2(H\cap K) \unlhd K$. If $K\setminus O^2(H\cap K)$ has an element of odd order, say $x$, then  every element in the coset $xO^2(H\cap K)\subseteq xH$ has odd order. By  \cite[Theorem B]{GN},  $O^2(H\cap K)$ is solvable, a contradiction. Therefore, $K\setminus O^2(H\cap K)$  has no element of odd order. By Lemma \ref{lem:p-singular}, $O^2(H\cap K) \unlhd K$ and $K/O^2(H\cap K)$ is a $2$-group as claimed.

\smallskip
\textbf{Claim $2$.}  
If $ g \in G \setminus H $ has even order, then every element in the coset $ gH $ also has even order.
\smallskip

This claim follows immediately from condition $ \OO $. If $gH$ has an element of odd order, then all elements in $gH$ also have odd order, and so is $g$. However, $g \in G\setminus H$ is assumed to have even order, a contradiction.

\smallskip
\textbf{Claim $3$.}  
Let $ N \trianglelefteq G $ be a normal subgroup. If $ HN/N $ is a proper subgroup of $ G/N $, then the pair $ (G/N, HN/N) $ satisfies condition $ \OO $.
\smallskip

Assume that $ HN/N $ is a  proper subgroup of $ G/N $. Let $ \overline{G} = G/N $, and for any $ g \in G $, write $ \overline{g} $ for its image in $ \overline{G} $.  Suppose $ \overline{g} \in \overline{G} \setminus \overline{H} $ has odd order. Then there exists $x\in G\setminus H$ of odd order such that $ \overline{g} = \overline{x}$. Since $ x $ has odd order and the pair $ (G,H) $ satisfies condition $ \OO $, it follows that all elements in the coset $ xH $ have odd order.
Hence, the image coset $ \overline{x} \, \overline{H} = \overline{g} \, \overline{H} $ consists entirely of elements of odd order in $ \overline{G} $. Therefore, the pair $ (G/N, HN/N) $ satisfies condition $ \OO $, as required.

\smallskip
\textbf{Claim $4$.}  $ G = \langle H^G \rangle $; that is, the normal closure of $ H $ in $ G $ is  $ G $.
\smallskip

Let $ L = \langle H^G \rangle $ denote the normal closure of $ H $ in $ G $. Recall that $O^2(H)=H$. Suppose, for contradiction, that $ L < G $. Since $ H $ is not normal in $ G $, we have $ H < L < G $. Then the pair $ (L, H) $ satisfies condition $ \OO $. Since $|L|+|H|<|G|+|H|$, and $H$ is nonsolvable, we must have $ H \trianglelefteq L \trianglelefteq G $.

Suppose now that $ L/H $ is not a $ 2 $-group. Then there exists an element $ x \in L \setminus H $ of odd order. Since $ x $ normalizes $ H $, and the coset $ xH $ consists entirely of odd order elements (by condition $ \OO $), it follows from \cite[Theorem~B]{GN} that $ H $ is solvable, a contradiction. Thus, $ L/H $ is a $ 2 $-group. Now observe that $ O^2(L) \unlhd H=O^2(H) $ and $H/O^2(L)$ is a $2$-group, it follows that $H=O^2(L)$. Since $ O^2(L) $ is characteristic in $ L \trianglelefteq G $, we conclude that $ H=O^2(L) \trianglelefteq G $, a contradiction.
Therefore, our assumption that $ L < G $ must be false. We conclude that $ G = \langle H^G \rangle $, as claimed.

\smallskip
\textbf{Claim $5$.}  
The core of $H$ in $G$ is trivial; that is, $H_G = \cap_{g\in G}H^g=1$.

\smallskip
Suppose, for contradiction, that $N = H_G > 1$. Then $ N \trianglelefteq G $, $ N \leq H $, and $ 1 < N < H $. By Claim~3, the pair $(G/N, H/N)$ satisfies condition $ \OO$. Since $H$ is not normal in $ G$,  $H/N$ is not normal in $G/N$, and thus by minimality of the counterexample, $H/N $ must be solvable.
Now, as $H = O^2(H)$ is not normal in $G$, by Lemma \ref{lem:p-singular}, there exists $ x \in G \setminus H $  of odd order. Then $xN \subseteq xH $, and by condition $ \OO$, all elements of $xN $ have odd order. Hence, by \cite[Theorem~B]{GN}, $N$ is solvable.
Since $N \trianglelefteq H $ and both $ N $ and $H/N$ are solvable, it follows that $ H $ is solvable, a contradiction. Therefore, $ H_G = 1$, as wanted.

\smallskip
\textbf{Claim $6$.}  
If $ N $ is a minimal normal subgroup of $ G $, then $G = HN.$
\smallskip

Let $ N$ be a minimal normal subgroup of $G$. Suppose, for contradiction, that $ HN < G $.
It follows that $ HN/N $ is a  proper subgroup of $ G/N $. Thus, either $HN/N \trianglelefteq G/N$ or the quotient $HN/N \cong H / (H \cap N)$ is solvable. (Note that $O^2(HN/N)=HN/N.$)

If the former holds, then $ H \leq HN \trianglelefteq G $, contradicting Claim~4 that the normal closure of $ H $ is all of $ G $. Hence, the latter must hold, i.e., $ H / (H \cap N) $ is solvable.
If $ N $ is solvable, then $ H \cap N $ is also solvable, which would imply $ H $ is solvable, a contradiction. Therefore, $ N $ is nonsolvable. Since $ H_G = 1 $ by Claim 5, we have $ N \not\leq H $ and consequently
$H \cap N < N.$  By Claim~1, either
$O^2(H \cap N) \trianglelefteq N$ \text{and}  $N / O^2(H \cap N) \text{ is a } 2\text{-group},$
or $ H \cap N $ is solvable.
The latter is impossible, so the former must hold. However, this contradicts Remark \ref{lem:directproduct}.
Thus, our assumption that $ HN < G $ is false, and we conclude that $ G = HN $.

\smallskip
\textbf{Claim $7$.} $G$ is a finite nonabelian simple group.
\smallskip

Suppose by contradiction that $G$ is not a finite nonabelian simple group. Let $N$ be a minimal normal subgroup of $G$. By Claim 6, $G = HN$. Since $N$ is a minimal normal subgroup of $G$, either $N$ is elementary abelian or 
$N \cong S^k,$
where $S$ is a nonabelian simple group and $k \geq 1$. Note that since $H$ is nonsolvable, it must have even order by Feit-Thompson's odd order theorem (see \cite{FT}). 

Assume first that $N$ is abelian. We claim that $H \cap N = 1$. Note that $H \cap N \trianglelefteq H$ and since $N$ is abelian, $N$ normalizes $H \cap N$. In particular, as $G = HN$, we have $H \cap N \trianglelefteq G$. Therefore, by Claim 5,  $H\cap N\leq H_G=1$.

We will show that $N$ does not contain elements of odd order. Suppose $1 \neq x \in N$ is an odd order element. Let $1 \neq h \in H$ be a $2$-element. Then $xh \in xH$ and hence $xh$ has odd order. However, in the quotient group $G/N$, we have $N(xh) = Nh,$
which is a nontrivial $2$-element in $G/N$ since $h \notin N$ is a $2$-element. This contradiction shows that $N$ is a $2$-group.

Let $1 \neq a \in H$ be an element of odd order. We will show that $a^n\in H$ for all $n\in N$. For a contradiction, suppose   that $a^n \notin H$ for some $n \in N$. Then 
$a^n a^{-1} \in a^n H,$
so $a^n a^{-1}$ has odd order. However,
$a^n a^{-1} = n^{-1} (a n a^{-1}) \in N,$
since $n \in N \trianglelefteq G$. So $a^n a^{-1}$ is also a $2$-element. Hence
$a^n a^{-1} = 1,$
and therefore $a^n = a \in H$, a contradiction.
Thus $a^n \in H$ for all $n \in N$ and all $a \in H$ of odd order. Let $\mathcal{U}$ be the set of all odd order elements in $H$. It follows that $\mathcal{U}^n = \mathcal{U}$ for all $n\in N$, and so $\left<\mathcal{U}\right> = \left<\mathcal{U}^n\right> = \left<\mathcal{U}\right>^n$, hence $N$ normalizes the subgroup $\left<\mathcal{U}\right>$ of $H$. Moreover, as $\mathcal{U}$ is closed under the conjugation action of $H$, $H$ also normalizes the subgroup $\left<\mathcal{U}\right>$. Thus, $\left<\mathcal{U}\right> \unlhd HN = G$, and by Claim 5, as $\left<\mathcal{U}\right>\leq H$, we have $\left<\mathcal{U}\right> \leq H_G=1$. Hence, $H$ does not contain any nontrivial elements of odd order, so $H$ is a $2$-group which is solvable, a contradiction.

Therefore,
$N \cong S^k$,
where $S$ is a nonabelian simple group and $k \geq 1$. Let $U = H \cap N$. Then $U < N$. If every element in $N \setminus U$ has even order, then by Lemma \ref{lem:p-singular}, 
$O^2(U) \trianglelefteq N$
and $N / O^2(U)$ is a nontrivial $2$-group, which is impossible by Remark \ref{lem:directproduct}. Thus we can find $x \in N \setminus U$ of odd order. Note that $U \trianglelefteq H$.

Assume first that $U = 1$. Let $z \in H$ be an involution. Then $z \notin N$. As the pair $(G, H)$ satisfies condition $\OO$, the coset $xH$ contains only odd order elements. Hence, $xz$ has odd order. However, in the quotient group $G/N$, $N(xz) = Nz$ is an involution as $z$ is an involution in $G\setminus N$, a contradiction.

Assume that $U > 1$. Then $(N, U)$ satisfies condition $\OO$. Also, by Claim 1, either
$O^2(U) \trianglelefteq N $  {and} $ N / O^2(U) \text{ is a } 2\text{-group},$
or $U$ is solvable. By Remark \ref{lem:directproduct}, the former case cannot happen. Hence $U$ is solvable.
Observe that $U < H$ since $\langle H^G \rangle = G$ so $H \not\leq N$. If $H / U$ is of odd order, then it is solvable and thus $H$ is solvable, a contradiction. Hence $H / U$ is of even order. Thus we can find a $2$-element $y \in H \setminus U$. We have
$
x y \in x H,$
so $x y$ has odd order. However, in the quotient group $G/N$, $N(xy) = Ny$ is a nontrivial $2$-element, which is a contradiction.

\smallskip
\textbf{Claim $8$.}  For every $\sigma\in\Aut(G)$, the pair $(G,\sigma(H))$ satisfies condition $\OO$. Consequently, if $x\in \Delta_H(G)$ has odd order, then $x^G y^G$ contains only odd order elements for every $y\in H.$ Furthermore, if $x\in \Delta_H(G)$ has even order, then $x^G y^G$ contains only even order elements for every $y\in H.$ Recall that $\Delta_H(G)=G\setminus \cup_{g\in G}H^g$.

\smallskip
We first show that the pair $(G,\sigma(H))$ satisfies condition $\OO$ for every $\sigma\in\Aut(G)$. Fix $\sigma\in\Aut(G)$. Note that $\sigma(H)$ is a proper subgroup of $G$. Let $x\in G\setminus \sigma(H)$ be an element of odd order. Then $t=\sigma^{-1}(x)\in G\setminus H$ has odd order. Let $a\in\sigma(H)$. Then $a=\sigma(h)$ for some $h\in H.$ As $(G,H)$ satisfies condition $\OO$, we see that $th$ has odd order. It follows that $\sigma(th)=\sigma(t)\sigma(h)=xa$ also has odd order and the claim follows.

Let $x\in \Delta_H(G)$ be of odd order. Then  $x^G\cap H=\emptyset$.  Let $y\in H$. Suppose by contradiction that there exists an element $z\in x^Gy^G$ which has even order. Then $z=x^uy^v$ for some $u,v\in G.$ By the previous claim, the pair $(G,H^v)$ satisfies condition $\OO$. Since $x\in \Delta_H(G)$, $x^u\in G\setminus H^v$ has odd order. As $y^v\in H^v$, condition $\OO$ implies that $z=x^uy^v\in x^uH^v$ has odd order, which is a contradiction. Therefore, $x^Gy^G$ consists entirely of odd order elements, for every odd order element $x\in \Delta_H(G)$ and $y\in H.$
For the remaining claim, the proof is similar.

\smallskip
\textbf{Claim $9$.} Let $x\in \Delta_H(G)$ and $h\in H$. Suppose that either $o(x)$ is odd and $o(h)$ is even or $o(x)$ is even and $o(h)$ is odd. Then $x^Gh^G\subseteq \Delta_H(G)$. 

\smallskip
First, assume that $o(x)$ is odd and $o(h)$ is even. By way of contradiction, suppose that $x^ah^b\not\in\Delta_H(G)$ for some $a,b\in G.$ Then $(x^ah^b)^g\in H$ for some $g\in G.$ Thus $z=x^uh^v\in H$ for some $u,v\in G.$ It follows that $(x^{-1})^uz=h^v$. Since $(x^{-1})^u\in \Delta_H(G)$ has odd order and $z\in H$, by Claim  $8,$   $h^v=(x^{-1})^uz$ must have odd order, which contradicts the choice of $h.$ Therefore, $x^Gh^G\subseteq \Delta_H(G)$ as required. The case when $o(x)$ is even and $o(h)$ is odd can be argued similarly.

\smallskip
\textbf{Claim $10$.}  The final contradiction.
\smallskip

Since $H$ is a proper subgroup of a finite nonabelian simple group $G$, we deduce that $|G:H|>2$. Recall that $\Delta_H(G)$ is nonempty. 

(a) Assume first that  $\Delta_H(G)$ contains a derangement $x$ of odd order.   Since $H$ is nonsolvable by hypothesis, we can find an element $1\neq h\in H$ of even order. Let $C=h^G$ and $D=x^G$. By Claim 9, $DC\subseteq \Delta_H(G)$ and by Claim 8 the product $DC$ contains only elements of odd order. Let $y\in DC.$ Then $y\in \Delta_H(G)$ is an odd order derangement and so, arguing as above, $y^GC\subseteq \Delta_H(G)$ which contains only odd order derangements. Thus $DC^2$ contains only odd order derangements. Repeating this argument, for each integer $k\ge 1$,  $DC^k$ contains only odd order derangements of $G$. By the Basic Covering Theorem (see \cite[Theorem 1.1]{AH}), since $C\neq 1$, there exists a positive integer $m$ such that $C^m=G$. It follows that $DC^m=DG=G\subseteq \Delta_H(G)$, which is a contradiction as the identity element in $G$ is not a derangement.

(b) Assume that every derangement in $\Delta_H(G)$ has even order and fix $x\in \Delta_H(G)$. As $H$ is nonsolvable, $H$ is not a $2$-group and thus $|H|$ is divisible by an odd prime $p$. By Cauchy's theorem, let $h\in H$ be an element of order $p$. As in the previous case, let $D=x^G$ and $C=h^G.$ By Claims 8 and 9, $DC\subseteq \Delta_H(G)$ and it contains only even order derangements. Hence for each integer $k\ge 1$, the product $DC^k$ contains only even order derangements. By \cite[Theorem 1.1]{AH}, we have $C^m=G$ for some positive integer $m$ and thus $DC^m=G\subseteq \Delta_H(G)$, which leads to a contradiction as before. This contradiction completes the proof of the claim.
\end{proof}

\begin{proof}[\textbf{Proof of Theorem \ref{th:odd-order-coset}}]
Let $G$ be a finite group and let $H$ be a proper subgroup of $G$. Suppose that for every element $x\in G\setminus H$ of odd order, all elements in the coset $xH$ have odd order. By Theorem \ref{th:odd-order},  either $O^2(H)$ is normal in $G$ or $H$ is solvable.
If $H$ is solvable, then the theorem follows. So, assume that $H$ is nonsolvable. Then $O^2(H)\unlhd G.$  If $G/O^2(H)$ is a $2$-group, then we are done. So, assume that $G/O^2(H)$ is not a $2$-group. Then we can find an odd order element $x\in G\setminus O^2(H)$. By hypothesis, $xH$ contains entirely of odd order elements and so $xO^2(H)$ contains only elements of odd order. By \cite[Theorem B]{GN}, $O^2(H)$ is solvable and so is $H$, which is a contradiction.
\end{proof}

\begin{proof}[\textbf{Proof of Corollary \ref{cor1}}]
First, observe that the pair $(G,H)$ satisfies the hypothesis of Theorem \ref{th:odd-order-coset}. Hence either $H$ is solvable or $O^2(H)\unlhd G$ and $G/O^2(H)$ is a $2$-group. If $H$ is solvable, then the conclusion holds. So, assume $H$ is not solvable. As $G/O^2(H)$ is a $2$-group, we see that $O^2(G)\leq O^2(H)\leq H$ and clearly $H$ is subnormal in $G$ since $G/O^2(G)$ is a $2$-group. This establishes (a).  Since $H$ is proper and subnormal in $G$, we must have $H<N_G(H)$ and so $(N_G(H),H)$ is an equal order pair. Finally, let $x\in G\setminus H$. We claim that $x$ is a $2$-element. Write $x=x_2x_{2'}$, where $x_2,x_{2'}$ are the $2$-part and $2'$-part of $x$, respectively. As $G/O^2(G)$ is a $2$-group, we have $x_{2'}\in O^2(G)\leq H$. Moreover, as $x\not\in H$, $x_2\not\in H$.  It follows that $xH=x_2H$ and hence $x$ and $x_2$ have the same order. In particular, $x$ is a $2$-element as wanted.
\end{proof}

\begin{proof}[\textbf{Proof of Theorem \ref{cor2}}]
Let $G$ be a counterexample to the theorem with minimal order. Then there exists a $p$-element $x\in G$ such that $xy$ is  $p$-regular for all nontrivial $p$-regular elements $y\in G$, but $x\not\in O_p(G).$ 
Let $L=\la x^G\ra$. Suppose $L<G$. Then by the minimality of $|G|$, $x\in O_p(L)$.  As $L\unlhd G$,  we have $O_p(L)\subseteq O_p(G)$ and thus $x\in O_p(G)$,   a contradiction. Hence, we may assume $G=\la x^G\ra.$

Let $1<N\unlhd G$. Then $xN$ is a $p$-element in $G/N$, and for all nontrivial $p$-regular elements $yN\in G/N$, we can find a nontrivial $p$-regular element $g\in G$ such that $yN=gN$ and thus $xyN=xgN$ is  $p$-regular in $G/N$. Therefore,  the pair $(G/N,xN)$ satisfies the hypothesis of the theorem, and by the minimality of $|G|$, $xN\in O_p(G/N)$. 
Suppose $O_p(G)>1$. Then since $O_p(G/O_p(G))$ is trivial and by the previous claim $xO_p(G)$ lies in $O_p(G/O_p(G))$, which forces $x\in O_p(G)$, a contradiction. So, we may assume $O_p(G)=1$.

Let $N$ be a minimal normal subgroup of $G$. Then $N$ is not a $p$-group. Choose a nontrivial $p$-regular element $y\in N$. Then $xyN=xN$. Since $xN$ is a $p$-element in $G/N$ and  $xy$ is  $p$-regular by hypothesis,  it follows that $x\in N$. But $G=\la x^G\ra$, so $G=N$. Hence, $G$ is a finite nonabelian simple group with $p$ dividing $|G|$. 

Let $C=x^G$ and $D=y^G$ for some nontrivial $p$-regular element $y\in G.$ By assumption, every element in $CD$ is $p$-regular. Since $x$ and $y$ have coprime orders, every element in $CD$ is nontrivial. By hypothesis again, $C^2D=C(CD)$ contains only nontrivial $p$-regular elements. Inductively, for each integer $k\ge 1$, the product $C^kD$ contains only nontrivial $p$-regular elements. Since $|C|>1$ as $x$ is nontrivial, by \cite[Theorem 1.1]{AH}, $C^m=G$ for some positive integer $m.$ It follows that $C^mD=GD=G$ consists of only nontrivial $p$-regular elements, which is a contradiction. The proof is now complete.
\end{proof}

\section*{Acknowledgment}  The authors are grateful to the reviewer for the helpful comments and  suggestions.



\begin{thebibliography}{ABCD}

\bibitem[AB]{Akhlaghi-Beltran} Z. Akhlaghi and A. Beltr\'an, Extending Camina pairs, J. Algebra {\bf 622} (2023), 220--232.

\bibitem[AH]{Arad-Herzog} Z. Arad and M. Herzog, {\it Products of conjugacy classes in groups}, Lecture Notes in Mathematics, \textbf{1112}, Springer-Verlag, Berlin, 1985.

\bibitem[ASH]{AH}  Z. Arad,  J. Stavi and M. Herzog, Powers and products of conjugacy classes in groups, in {Products of conjugacy classes in groups}, 6--51, Lecture Notes in Math., \textbf{1112}, Springer-Verlag, Berlin, 1985.

\bibitem[B]{Beltran} A. Beltr\'an Conjugacy classes and union of cosets of normal subgroups, Monatsh. Math. {\bf 201} (2023), no.~2, 349--358.

\bibitem[BLM+]{BLM} A. Beltr\'an, R. Lyons, A. Moret\'{o}, G. Navarro, A. S\'{a}ez, P. H. Tiep, Order of products of elements in finite groups, J. Lond. Math. Soc. (2) {\bf 99} (2019), no.~2, 535--552. 


\bibitem[C]{Camina} A.~R. Camina, Some conditions which almost characterize Frobenius groups, Israel J. Math. {\bf 31} (1978), no.~2, 153--160.

\bibitem[CCL+]{Camina2} A. R. Camina, R. D. Camina, M. L. Lewis, E. Pacifici, L. Sanus, M. Vergani, Group cosets with all elements of equal order,  J. Algebra {\bf 688} (2026), 694--717.

\bibitem[CM]{Chillag-Macdonald}D. Chillag and I.~D. Macdonald, Generalized Frobenius groups, Israel J. Math. {\bf 47} (1984), no.~2-3, 111--122.

\bibitem[FT]{FT} W. Feit\ and\ J.~G. Thompson, Solvability of groups of odd order, Pacific J. Math. {\bf 13} (1963), 775--1029.


\bibitem[F]{Flavell} P. Flavell, A characterisation of $F_2(G)$, J. Algebra {\bf 255} (2002), no.~2, 271--287.

\bibitem[GN]{GN} R.~M. Guralnick\ and\ G. Navarro, Squaring a conjugacy class and cosets of normal subgroups, Proc. Amer. Math. Soc. {\bf 144} (2016), no.~5, 1939--1945.

\bibitem[I]{Isaacsbook} I.~M. Isaacs, {Character theory of finite groups}, AMS Chelsea Publ., Providence, RI, 2006.

\bibitem[KV]{Kuisch-vanderWaall} E.~B. Kuisch and R.~W. van~der~Waall, Homogeneous character induction, J. Algebra {\bf 149} (1992), no.~2, 454--471.

\bibitem[L]{Lewis} M.~L. Lewis, Camina groups, Camina pairs, and generalizations, in {Group theory and computation}, 141--173, Indian Stat. Inst. Ser., Springer, Singapore (2018).

\bibitem[RLL]{Ren-Lu-Li}Y.~C. Ren, X. Li and Z.~Q. Lu, On homogeneous character induction, Algebra Colloq. {\bf 3} (1996), no.~4, 355--368.

\bibitem[R]{Robinson} G R. Robinson, private communication.

\bibitem[Th]{Thompson} J.~G. Thompson, Finite groups with fixed-point-free automorphisms of prime order, Proc. Nat. Acad. Sci. U.S.A. {\bf 45} (1959), 578--581. 

\end{thebibliography}
\end{document}